\documentclass[12pt, reqno]{amsart}

\newtheorem{theorem}{Theorem}[section]
\newtheorem{lemma}[theorem]{Lemma}
\newtheorem{proposition}[theorem]{Proposition}
   
\newtheorem{definition}[theorem]{Definition}
\newtheorem{example}[theorem]{Example}

\newtheorem{remark}[theorem]{Remark}

\newtheorem{question}[theorem]{Question}
\newtheorem{discussion}[theorem]{Discussion}

\numberwithin{equation}{section}
\newtheorem{thmx}{Theorem}

\newenvironment{discussionbox}[1][]{%
	\begin{discussion}[#1]\pushQED{\qed}}{\popQED \end{discussion}}

\usepackage{times}\usepackage{enumerate}
 \usepackage{mathrsfs}
\usepackage{tfrupee}
\usepackage{tikz}
\usetikzlibrary{chains,fit}
\usetikzlibrary{shapes,snakes}
\usetikzlibrary{graphs}
\usepackage{graphicx,adjustbox}
\usepackage{amsmath,amssymb}
\usepackage{amscd}
\usepackage{graphicx}
\usepackage[backend=bibtex,style=alphabetic, ]{biblatex}
\addbibresource{ref.bib}
\usepackage{color}
\usepackage[normalem]{ulem}
\usepackage[colorlinks=true,linkcolor=blue, citecolor=blue, urlcolor=blue, backref=true]{hyperref}
\usepackage{float}
\usepackage{caption}
\usepackage{subcaption}
\usepackage{cleveref}

\usepackage{geometry}
 \geometry{
 a4paper,
 total={170mm,257mm},
 left=30mm,
 top=40mm,
 bottom=40mm,
 right=30mm,
 }
\usepackage[backend=bibtex]{biblatex}
\addbibresource{ref.bib}



\DeclareMathOperator{\height}{ht}
\DeclareMathOperator{\bight}{bight}
\DeclareMathOperator{\reg}{reg}
\DeclareMathOperator{\Projdim}{pd}
\DeclareMathOperator{\vnum}{\mathrm{v}}
\DeclareMathOperator{\Ass}{\mathrm{Ass}}
\DeclareMathOperator{\frakp}{\mathfrak{p}}

\DeclareMathOperator{\Ext}{Ext}
\DeclareMathOperator{\Hom}{Hom}

\begin{document}

\title[On the $\mathrm{v}$-number of Gorenstein ideals and Frobenius powers]{On the $\mathrm{v}$-number of Gorenstein ideals and Frobenius powers}
\author{Nirmal Kotal}
\author{
Kamalesh Saha
}

\date{}
\address{\small \rm Chennai Mathematical Institute, Siruseri, Chennai, Tamil Nadu 603103, India}
\email{nirmal@cmi.ac.in}

\address{\small \rm Chennai Mathematical Institute, Siruseri, Chennai, Tamil Nadu 603103, India}
\email{ksaha@cmi.ac.in}
\date{}

\subjclass[2020]{Primary 13H10, 13A35, 13F20, Secondary 05E40}

\keywords{$\mathrm{v}$-number, Castelnuovo-Mumford regularity, Gorenstein algebra, level algebra, Frobenius power}

\allowdisplaybreaks

\begin{abstract}
In this paper, we show the equality of the (local) $\mathrm{v}$-number and Castelnuovo-Mumford regularity of certain classes of Gorenstein algebras, including the class of Gorenstein monomial algebras. Also, for the same classes of algebras with the assumption of level, we show that the (local) $\mathrm{v}$-number serves as an upper bound for the regularity. Moreover, we investigate the $\mathrm{v}$-number of Frobenius powers of graded ideals in prime characteristic setup. In this study, we demonstrate that the $\mathrm{v}$-numbers of Frobenius powers of graded ideals have an asymptotically linear behaviour. In the case of unmixed monomial ideals, we provide a method for computing the $\mathrm{v}$-number without prior knowledge of the associated primes.
\end{abstract}

\maketitle

\section{Introduction}

Let $I$ be an ideal in a Noetherian ring $S$. Then $I$ can be written as a finite irredundant intersection of primary ideals, which is known as the primary decomposition of $I$. Primary decomposition is an effective tool for investigating the algebra $S/I$. The associated primes of $I$, denoted by $\mathrm{Ass}(I)$, are the radicals of the primary ideals appearing in the primary decomposition of $I$. It is a well-known fact that associated primes of $I$ are precisely the prime ideals of the form $I:f$ for some $f\in S$. Let $S=K[x_{1}\,\ldots, x_{n}]=\bigoplus_{d=0}^{\infty}S_{d}$ denote the polynomial ring in $n$ variables over a field $K$ with standard grading. Then, for a graded ideal $I\subsetneq S$, the associated primes of $I$ are precisely the prime ideals of the form $I:f$ for some $f\in S_d$ and this defines the notion of $\mathrm{v}$-number.

\begin{definition}{\rm
Let $I$ be a proper graded ideal of $S$. Then the $\mathrm{v}$-\textit{number} of $I$, denoted by $\mathrm{v}(I)$, is defined as follows
		$$ \mathrm{v}(I):=
		\mathrm{min}\{d\geq 0 \mid \exists\, f\in S_{d}\,\,\text{and}\,\, \mathfrak{p}\in \mathrm{Ass}(I)\,\, \text{with}\,\, I:f=\mathfrak{p} \}.$$
		For each $\mathfrak{p}\in \mathrm{Ass}(I)$, we can locally define the $\mathrm{v}$-number as
		$$\mathrm{v}_{\mathfrak{p}}(I):=\mathrm{min}\{d\geq 0 \mid \exists\, f\in R_{d}\,\, \text{satisfying}\,\, I:f=\mathfrak{p} \}.$$
		Then $\mathrm{v}(I)=\mathrm{min}\{\mathrm{v}_{\mathfrak{p}}(I)\mid \mathfrak{p}\in\mathrm{Ass}(I)\}$. Note that $\mathrm{v}(I)=0$ if and only if $I$ is prime.
	}
\end{definition}

The motivation behind studying the $\vnum$-number has its foundation in coding theory. In 2020, Cooper et al. introduced the invariant $\mathrm{v}$-number \cite{cstpv20} to investigate the asymptotic behaviour of the minimum distance function of Reed-Muller-type codes. Let $\mathbb{X}$ be a  finite set of projective points, and $I(\mathbb{X})$ denote its vanishing ideal. Then it has been proved in \cite{cstpv20} that $\delta_{I(\mathbb{X})}(d)=1$ if and only if $\mathrm{v}(I(\mathbb{X}))\leq d$, where $\delta_{I(\mathbb{X})}$ denotes the minimum distance function of the projective Reed-Muller-type codes associated to $\mathbb{X}$. In \cite{js23}, the authors mentioned a geometrical point of view of local $\mathrm{v}$-numbers. Specifically, the local $\vnum$-number expands upon the concept of the degree of a point within a finite collection of projective points as presented in \cite{gkr93}.

Researchers investigate the $\mathrm{v}$-number from several perspectives, such as:
\begin{itemize}
	\item $\mathrm{v}$-number of monomial ideals (including edge ideals) in  \cite{civan23}, \cite{grv21}, \cite{vedge}, \cite{saha2023mathrmvnumber}, and \cite{ki_vmon}.
	\item $\mathrm{v}$-number of binomial edge ideals  in \cite{ski23}, and \cite{js23}.
	\item $\vnum$-number of powers of graded ideals in \cite{fs23} and \cite{ficarra2023simon}.
	\item $\vnum$-number as a lower bound of Castelnuovo-Mumford regularity (in short regularity) in \cite{ski23}, \cite{npv18}, \cite{cstpv20}, \cite{vedge}, \cite{saha2023mathrmvnumber}, and \cite{ki_vmon}.
\end{itemize}
Nevertheless, the question of whether the v-number serves as a lower bound for regularity continues to be a subject of current interest. Indeed, there was a conjecture in \cite[Conjecture 4.2]{npv18} whether $\mathrm{v}(I)\leq \reg S/I$ for any square-free monomial ideal. However, in \cite{vedge}, an edge ideal of a graph is provided as a counter-example. Therefore, the following question emerges as pertinent and intriguing to the researchers:
\begin{question}
	Let $I\subseteq S$ be a graded ideal. What conditions on $I$ will ensure the equality $\mathrm{v}(I)= \mathrm{reg} S/I$ or the inequality $\mathrm{v}(I)\geq \mathrm{reg} S/I$?
\end{question}
An affirmative answer to the question also entails providing a computational tool for determining the regularity or establishing an upper bound on the regularity. There is a limited number of studies in the existing literature in this direction. For example, it has been shown that if $I$ is a complete intersection monomial ideal \cite[Proposition 3.10]{ki_vmon}, or if $S/I$ is a certain level algebra of dimension at most one \cite{cstpv20}, then the equality $\vnum(I)=\reg S/I$ holds. This paper aims to investigate the question with potentially broader applicability. Our main results in this regard are as follows:
\begin{thmx}{(Theorem \ref{thm:gor_vnum_equal_reg},\ref{thm:level_vnum})}
	Let $I\subset S$ be a graded ideal and $\mathfrak{p}\in \Ass(I)$ be an associated prime of $I$ generated by linear forms. Then, the following hold.
	\begin{enumerate}
		\item If $S/I$ is Gorenstein, then $\vnum_{\mathfrak{p}}(I)=\reg S/I$.
		
		\item If $S/I$ is level, then $\vnum_{\mathfrak{p}}(I)\geq\reg S/I$.
	\end{enumerate}
	More specifically, if all the associated primes of $I$ are generated by linear forms (for example, if $I$ is a monomial ideal), then we can replace the local $\vnum$-number $\vnum_{\frakp}(I)$ by $\vnum(I)$ in above statements.
\end{thmx}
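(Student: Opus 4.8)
The plan is to establish, for the given $\mathfrak{p}$, the two local statements, and then read off the global ones: since $\vnum(I)=\min_{\mathfrak{q}\in\Ass(I)}\vnum_{\mathfrak{q}}(I)$, once \emph{every} associated prime of $I$ is generated by linear forms, part (1) forces all $\vnum_{\mathfrak{q}}(I)$ to equal $\reg S/I$ and part (2) forces them all to be $\ge\reg S/I$, so $\vnum(I)$ behaves as asserted (and for monomial $I$ each associated prime is generated by a subset of the variables). So fix $\mathfrak{p}$ and put $R:=S/I$, $A:=S/\mathfrak{p}$, $\bar{\mathfrak{p}}:=\mathfrak{p}/I$. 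Because $\mathfrak{p}$ is generated by linear forms, $A$ is a standard graded polynomial ring, hence Cohen--Macaulay with $\omega_A\cong A(-\dim A)$; because $R$ is Cohen--Macaulay (both Gorenstein and level algebras are), it is unmixed, so $\bar{\mathfrak{p}}$ is a \emph{minimal} prime of $R$ with $R/\bar{\mathfrak{p}}\cong A$, $\dim A=\dim R=:m$, and in particular $A$ is a maximal Cohen--Macaulay $R$-module. The key reformulation is that a graded homomorphism $A(-d)\to R$ is the same datum as an element $\bar f\in R_d$ with $\bar{\mathfrak{p}}\bar f=0$, and it is injective exactly when $\operatorname{ann}_R(\bar f)=\bar{\mathfrak{p}}$; hence $\vnum_{\mathfrak{p}}(I)=\min\{d\ge 0:\text{there is a graded injection }A(-d)\hookrightarrow R\}$.

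For the lower bound $\vnum_{\mathfrak{p}}(I)\ge\reg S/I$ (this is (2), and half of (1)), I would take such an injection $\iota\colon A(-d)\hookrightarrow R$ and apply the canonical duality $\Hom_R(-,\omega_R)$ on maximal Cohen--Macaulay modules. Using $\Hom_R(A,\omega_R)\cong\omega_A\cong A(-m)$ (the codimension-zero instance of the canonical module of a quotient), this produces a graded homomorphism $\psi\colon\omega_R\to\Hom_R(A(-d),\omega_R)\cong A(d-m)$, and $\psi\ne 0$ since the duality functor is faithful and $\iota\ne 0$. Now ``$R$ is level'' means precisely that $\omega_R$ is generated in its least degree $-a(R)$, and $-a(R)=m-\reg S/I$ because $\reg S/I=a(R)+\dim R$; therefore $\operatorname{im}\psi$ is nonzero and generated in degree $m-\reg S/I$, so it has a nonzero element of that degree. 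Since $A(d-m)$ vanishes in degrees $<m-d$, this forces $m-\reg S/I\ge m-d$, i.e. $d\ge\reg S/I$. Taking the minimum over $d$ proves (2).

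For the reverse inequality in (1), suppose $R$ is Gorenstein, so $\omega_R\cong R(a(R))$. Then $\Hom_R(A,R)\cong\omega_A(-a(R))\cong A(-m-a(R))=A(-\reg S/I)$ as graded $R$-modules, that is, $(0:_R\bar{\mathfrak{p}})\cong A(-\reg S/I)$. Transporting $1$ through an isomorphism $A(-\reg S/I)\xrightarrow{\sim}(0:_R\bar{\mathfrak{p}})$ produces an element $\bar f\in R_{\reg S/I}$ with $\operatorname{ann}_R(\bar f)=\bar{\mathfrak{p}}$, so $\vnum_{\mathfrak{p}}(I)\le\reg S/I$; together with the previous paragraph this yields $\vnum_{\mathfrak{p}}(I)=\reg S/I$, proving (1).

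The steps I expect to demand the most care are: verifying that $A$ is genuinely a maximal Cohen--Macaulay $R$-module (this is exactly where the minimality of $\bar{\mathfrak{p}}$, hence unmixedness of $R$, is used); the non-vanishing $\psi\ne 0$ (alternatively: $\psi=0$ would give $\bar f\in\operatorname{ann}_R(\omega_R)$, which is impossible since $(\omega_R)_{\bar{\mathfrak{p}}}\cong\omega_{R_{\bar{\mathfrak{p}}}}$ is a faithful module over the Artinian local ring $R_{\bar{\mathfrak{p}}}$); and keeping the graded shifts consistent throughout $\Hom_R(A,\omega_R)\cong\omega_A\cong A(-\dim A)$ and the relation $\reg S/I=a(R)+\dim R$. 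Everything else is standard graded local duality for Cohen--Macaulay rings admitting a canonical module.
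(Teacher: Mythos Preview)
Your proof is correct and takes a genuinely different route from the paper's, particularly for part~(2).

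For part~(1), the paper invokes Peskine--Szpiro linkage (Proposition~\ref{prop:gor_linkage}): comparing the minimal free resolutions of $S/I$ and $S/\mathfrak{p}$, the map $F_c\to G_c$ between the top free modules is multiplication by a form $f$ of degree $\reg S/I$, and linkage gives both $I:\mathfrak{p}=(I,f)$ and $I:f=\mathfrak{p}$. Your computation of $(0:_R\bar{\mathfrak{p}})=\Hom_R(A,R)\cong\omega_A(-a(R))\cong A(-\reg S/I)$ reaches the same conclusion by a direct canonical-module identity; the underlying duality is the same, but you bypass the free-resolution comparison entirely.

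For part~(2), the approaches diverge. The paper reduces to the Gorenstein case via Nagata idealization: it forms $\widetilde{R}=R\ltimes\omega_R(-a-1)$, shows this is Gorenstein with $\reg\widetilde{R}=\reg R+1$ and that $\widetilde{\mathfrak{p}}=\mathfrak{p}+(y_1,\dots,y_m)$ is an associated prime generated by linear forms, and then compares $\vnum_{\widetilde{\mathfrak{p}}}(\widetilde{I})$ with $\vnum_{\mathfrak{p}}(I)$ by an explicit case analysis on whether $fy_i\in\widetilde{I}$. Your argument is more intrinsic: you read an injection $A(-d)\hookrightarrow R$ as a nonzero map $\omega_R\to A(d-m)$ and use the single-degree generation of $\omega_R$ to force $d\ge\reg R$. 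This is shorter, handles~(1) and~(2) uniformly, and makes transparent exactly where the level hypothesis enters (generation of $\omega_R$ in one degree). The paper's route, by contrast, is more constructive---it produces an explicit Gorenstein algebra and an explicit witness---and its linkage statement in~(1) yields the extra information $I:\mathfrak{p}=(I,f)$, not just the minimal degree.

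The points you flag as needing care are exactly right: $A$ is maximal Cohen--Macaulay over $R$ because $\operatorname{depth}_R A=\operatorname{depth}_A A=\dim A=\dim R$ (unmixedness), and $\psi\ne 0$ follows from the faithfulness of $\omega_R$ over a Cohen--Macaulay ring (equivalently, $\Hom_R(\omega_R,\omega_R)=R$), which forces $\bar f\notin\operatorname{ann}_R(\omega_R)=0$.
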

\noindent We present examples to support the theory stated above. Specifically, we give a level monomial ideal $I$ for which $\vnum(I)>\reg S/I$ (see Example \ref{exmp:level_vi_geq}), as well as a Cohen-Macaulay monomial ideal $I$ for which $\vnum(I)>\reg S/I$ (see Example \ref{exmp:cm:vi}).

\medskip

In the second part of this article, we study the asymptotic behaviour of the $\vnum$-number of Frobenius power of graded ideals. Over the course of time, researchers have conducted extensive investigations on several algebraic invariants, including depth, regularity, projective dimension, Betti numbers, etc., associated with the usual power and the Frobenius power of a graded ideal. Given the novelty of the $\vnum$-number notion, there exists just a single work \cite{fs23} that investigates the asymptotic properties of the $\vnum$-number pertaining to powers of graded ideals. This paper aims to address the existing research gap by conducting an investigation into the $\vnum$-number of Frobenius powers. The following are some significant results of this section.
\begin{thmx}{(Theorem \ref{thm:assymptotic_of_vi}, Proposition \ref{propviqlower})}
	Let $S$ be a polynomial ring over a field of prime characteristic $p$, and in this context, $q$ is always a power of $p$. Let $I\subset S$ be a graded ideal and the $q$-th Frobenius power of $I$, defined as $I^{[q]}:=\left( a^q : a\in I\right)$. Then the following results hold
	\begin{enumerate}
		\item $\vnum(I^{[q]})\geq q\vnum(I)$ for all $q\geq 1$ and hence $\left\{\frac{\vnum(I^{[q]})}{q}\right\}$ is a non-decreasing sequence in $q$, where $ q=p^e, e\in \mathbb{N}$.
		\item $\displaystyle{\lim_{q \to \infty}} \frac{\vnum(I^{[q]})}{q}$ exists. 
		\item If $I$ is an unmixed monomial ideal, then $\vnum(I^{[q]})=q\vnum(I)+(q-1)\height(I)$ for all $q\geq 1$.
	\end{enumerate}
\end{thmx}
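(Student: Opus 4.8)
I would prove the three parts along somewhat different lines.

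\medskip

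\emph{Part (1) and monotonicity.}
For the bound $\vnum(I^{[q]})\ge q\,\vnum(I)$ I would first reformulate the local $\vnum$-number: for a (necessarily homogeneous) $\mathfrak{p}\in\Ass(I)$ and homogeneous $f$, one has $I:f=\mathfrak{p}$ if and only if $f\in (I:\mathfrak{p})\setminus I^{\langle\mathfrak{p}\rangle}$, where $I^{\langle\mathfrak{p}\rangle}:=IS_{\mathfrak{p}}\cap S$ is the (homogeneous) contraction; hence $\vnum_{\mathfrak{p}}(I)=\alpha\big((I:\mathfrak{p})/((I:\mathfrak{p})\cap I^{\langle\mathfrak{p}\rangle})\big)$, the initial degree $\alpha$ of a finitely generated graded module (the least degree carrying a nonzero component). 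The engine is the Frobenius (Peskine--Szpiro) functor $F_{e}$: since $S$ is regular it is exact (Kunz), it commutes with localization, with colon against a finitely generated ideal, and with finite intersection, it sends $S/\mathfrak{a}$ to $S/\mathfrak{a}^{[q]}$, it preserves associated primes (so $\Ass(S/I^{[q]})=\Ass(S/I)$), and on a graded module it multiplies all twists by $q$, so $\alpha(F_{e}M)\ge q\,\alpha(M)$. Now $\mathfrak{p}^{[q]}\subseteq\mathfrak{p}$ forces any $f$ with $I^{[q]}:f=\mathfrak{p}$ into $(I^{[q]}:\mathfrak{p}^{[q]})\setminus (I^{[q]})^{\langle\mathfrak{p}\rangle}=(I:\mathfrak{p})^{[q]}\setminus (I^{\langle\mathfrak{p}\rangle})^{[q]}$, so $\vnum_{\mathfrak{p}}(I^{[q]})\ge\alpha\big(F_{e}((I:\mathfrak{p})/((I:\mathfrak{p})\cap I^{\langle\mathfrak{p}\rangle}))\big)=q\,\vnum_{\mathfrak{p}}(I)$; minimizing over $\mathfrak{p}\in\Ass(I^{[q]})=\Ass(I)$ gives (1). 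Since $I^{[pq']}=(I^{[q']})^{[p]}$, applying (1) with $I$ replaced by $I^{[q']}$ turns it into $\vnum(I^{[q]})/q\ge\vnum(I^{[q']})/q'$, the asserted monotonicity. (For monomial ideals one can argue instead directly with exponent vectors, which is the route giving the sharp formula (3).)

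\medskip

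\emph{Part (2).}
By monotonicity the sequence converges in $[0,\infty]$, and the task is a linear-in-$q$ upper bound for $\vnum(I^{[q]})$; note that $\vnum(J)\le\reg(S/J)$ cannot be invoked, as it fails already for Cohen--Macaulay $J$. Fix a minimal prime $\mathfrak{p}$ of $I$, a homogeneous primary decomposition $I=\bigcap_{\mathfrak{r}}Q_{\mathfrak{r}}$, and a homogeneous $c\in\prod_{\mathfrak{r}\ne\mathfrak{p}}Q_{\mathfrak{r}}$ with $c\notin\mathfrak{p}$ (a product of ideals none lying in the prime $\mathfrak{p}$ does not lie in $\mathfrak{p}$). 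Then $I^{[q]}=\bigcap_{\mathfrak{r}}Q_{\mathfrak{r}}^{[q]}$ is again a primary decomposition, $c^{q}\in Q_{\mathfrak{r}}^{[q]}$ for $\mathfrak{r}\ne\mathfrak{p}$ while $c^{q}\notin\mathfrak{p}$, so multiplication by $c^{q}$ sends any $f$ with $(Q_{\mathfrak{p}})^{[q]}:f=\mathfrak{p}$ to $c^{q}f$ with $I^{[q]}:(c^{q}f)=\mathfrak{p}$; hence $\vnum(I^{[q]})\le\vnum_{\mathfrak{p}}(I^{[q]})\le q\deg c+\vnum((Q_{\mathfrak{p}})^{[q]})$. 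Since $(Q_{\mathfrak{p}})^{[q]}$ is $\mathfrak{p}$-primary, $\vnum((Q_{\mathfrak{p}})^{[q]})=\alpha\big((Q_{\mathfrak{p}}^{[q]}:\mathfrak{p})/Q_{\mathfrak{p}}^{[q]}\big)$ is at most the top generating degree of the ideal $Q_{\mathfrak{p}}^{[q]}:\mathfrak{p}$, hence at most $\reg(Q_{\mathfrak{p}}^{[q]}:\mathfrak{p})$; and because the minimal free resolution of $S/Q_{\mathfrak{p}}^{[q]}$ is $F_{e}$ of that of $S/Q_{\mathfrak{p}}$ with all twists scaled by $q$ (so $\reg(S/Q_{\mathfrak{p}}^{[q]})\le q\,\reg(S/Q_{\mathfrak{p}})+(q-1)\,\Projdim(S/Q_{\mathfrak{p}})$), and passage to colon ideals perturbs regularity by correspondingly linear amounts, $\reg(Q_{\mathfrak{p}}^{[q]}:\mathfrak{p})=O(q)$. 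Thus $\vnum(I^{[q]})=O(q)$ and the limit is finite.

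\medskip

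\emph{Part (3).}
Here I would compute both bounds explicitly, using $\Ass(I^{[q]})=\Ass(I)$ and the fact that the $\vnum$-number of a monomial ideal is attained by a monomial. If $u$ is a monomial with $I^{[q]}:u=P_{C}$, then for $i\in C$ we have $x_{i}u\in I^{[q]}$, so $u_{j}^{q}\mid x_{i}u$ for some minimal monomial generator $u_{j}$ of $I$ while $u\notin I^{[q]}$; comparing exponent vectors then gives $\deg_{x_{i}}(u)\equiv-1\pmod q$ for every $i\in C$. Setting $g:=\prod_{k}x_{k}^{\lfloor\deg_{x_{k}}(u)/q\rfloor}$, a short divisibility check yields $I:g=P_{C}$, while $\deg u-q\deg g=(q-1)\abs{C}+\sum_{k\notin C}(\deg_{x_{k}}(u)\bmod q)\ge(q-1)\abs{C}=(q-1)\height(I)$ by unmixedness; hence $\vnum(I^{[q]})\ge q\,\vnum(I)+(q-1)\height(I)$. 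Conversely, choose $P_{C_{0}}\in\Ass(I)$ and a monomial $u_{0}$ with $I:u_{0}=P_{C_{0}}$ and $\deg u_{0}=\vnum(I)$, and put $g_{0}:=u_{0}^{q}\prod_{i\in C_{0}}x_{i}^{q-1}$; from $(x_{i}u_{0})^{q}\in I^{[q]}$ one gets $P_{C_{0}}\subseteq I^{[q]}:g_{0}$, and if a monomial $w$ with $\operatorname{supp}(w)\cap C_{0}=\varnothing$ satisfied $wg_{0}\in I^{[q]}$, then $v_{0}^{q}\mid wg_{0}$ for some minimal monomial generator $v_{0}$ of $I$, which forces $v_{0}\mid wu_{0}$, i.e.\ $w\in I:u_{0}=P_{C_{0}}$, a contradiction; therefore $I^{[q]}:g_{0}=P_{C_{0}}$ and $\vnum(I^{[q]})\le\deg g_{0}=q\,\vnum(I)+(q-1)\height(I)$.

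\medskip

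\emph{Expected difficulties.}
The conceptual core is the compatibility of $F_{e}$ with the operations $(-):\mathfrak{p}$, $(-)S_{\mathfrak{p}}\cap S$ and finite intersection, together with $\alpha(F_{e}M)\ge q\,\alpha(M)$; all of this stems from Kunz's flatness theorem but must be assembled carefully, and the strict inclusion $\mathfrak{p}^{[q]}\subsetneq\mathfrak{p}$ is exactly why (1) is an inequality rather than an equality. I expect the genuine obstacle to be the linear upper bound in (2): with $\vnum(J)\le\reg(S/J)$ off the table one is pushed through the reduction to the $\mathfrak{p}$-primary component and a regularity estimate for a colon ideal, and it is worth verifying carefully that the latter really is $O(q)$. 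In (3) the effort is bookkeeping with the floor functions and the partition of variables into $C$ and its complement — routine, but it is precisely there that the extra summand $(q-1)\height(I)$, invisible in (1), is produced.
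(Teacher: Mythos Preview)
Your arguments for Parts~(1) and~(3) are correct and close in spirit to the paper's, though packaged differently. In~(1) you pass through the Frobenius functor and the description $\vnum_{\mathfrak{p}}(I)=\alpha\big((I:\mathfrak{p})/((I:\mathfrak{p})\cap I^{\langle\mathfrak{p}\rangle})\big)$, which in fact yields the local inequality $\vnum_{\mathfrak{p}}(I^{[q]})\ge q\,\vnum_{\mathfrak{p}}(I)$; the paper instead writes a witness $f$ for $\vnum(I^{[q]})$ as $\sum h_i^{q}r_i$ with $h_i\in(I:\mathfrak{p})\setminus I$ and extracts one $h_i$ with $I:h_i=\mathfrak{p}$. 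In~(3) your exponent-vector computation with $g=\prod x_k^{\lfloor\deg_{x_k}(u)/q\rfloor}$ and the explicit witness $g_0=u_0^{q}\prod_{i\in C_0}x_i^{q-1}$ is essentially the monomial unpacking of the paper's decomposition $f=h^{q}r$ together with $\vnum(\mathfrak{p}^{[q]})=(q-1)\height(\mathfrak{p})$ for a prime generated by variables.

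The real gap is in Part~(2). Your reduction to the $\mathfrak{p}$-primary component via the element $c$ is fine, and the bound $\reg(S/Q_{\mathfrak{p}}^{[q]})\le q\reg(S/Q_{\mathfrak{p}})+(q-1)\Projdim(S/Q_{\mathfrak{p}})$ is standard. But the assertion that ``passage to colon ideals perturbs regularity by correspondingly linear amounts'' is not a theorem: there is no general inequality bounding $\reg(J:\mathfrak{p})$ in terms of $\reg J$ and data of $\mathfrak{p}$ that would give $\reg(Q_{\mathfrak{p}}^{[q]}:\mathfrak{p})=O(q)$ as stated, and the short exact sequences one might try lead to circular estimates. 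The paper avoids this entirely by constructing an explicit witness: pick $f$ with $I:f=\mathfrak{p}$ and $\deg f=\vnum_{\mathfrak{p}}(I)$, pick $h$ with $\mathfrak{p}^{[q]}:h=\mathfrak{p}$ and $\deg h=\vnum(\mathfrak{p}^{[q]})$, and check that $I^{[q]}:(f^{q}h)=\mathfrak{p}$; then one only needs $\vnum(\mathfrak{p}^{[q]})\le(q-1)\sum\deg g_i$, which follows from an elementary descent on the products $\prod g_i^{a_i}$ with $0\le a_i\le q-1$ (if all such products of a given total exponent lie in $\mathfrak{p}^{[q]}$, so do those of exponent one less, modulo $\mathfrak{p}^{[q]}:\mathfrak{p}$). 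This sidesteps regularity bounds altogether and gives the linear upper bound $\vnum(I^{[q]})\le q\,\vnum_{\mathfrak{p}}(I)+(q-1)\sum\deg g_i$ directly.
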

To prove the above theorem, we introduce a new invariant $\alpha_{q}(I)$ as follows: 
\begin{align*}
	\alpha_q (I)=\min\left\{d \mid \left[\frac{I^{[q]}:I}{I^{[q]}} \right]_d\neq 0 \right\}.
\end{align*} 
$\alpha_{q}(I)$ helps us to obtain an upper bound for $\vnum(I^{[q]})$ (see Proposition \ref{propviqupper}). In  Theorem \ref{thmalphaq}, we show that $\lim_{q\to \infty}\frac{\alpha_{q}(I)}{q}$ exists. Also, we show that if $I$ is radical, then $\lim_{q\to \infty}\frac{\alpha_{q}(I)}{q}=\lim_{q\to \infty}\frac{\vnum(I^{[q]})}{q}$. Due to Theorem \ref{thmalphaq}(\ref{thm:alph:ceil_vi}) and polarization technique, we prove for an unmixed monomial ideal $I$ that $\vnum(I)=\lceil \frac{\alpha_{q}(I^{\mathcal{P}})}{q}\rceil-\height (I)$ when $q>\dim S^{\mathcal{P}}$, where $I^{\mathcal{P}}$ is the polarization of $I$ and $S^{\mathcal{P}}$ is the corresponding polynomial ring of $I^{\mathcal{P}}$ (see Remark \ref{rmk:vnum_pol}). Therefore, for an unmixed monomial ideal $I$, by investigating $\alpha_{q}(I)$, we can compute $\vnum(I)$ without knowing the primary decomposition. 

The paper is structured as follows. Section \ref{preli} provides an overview of the necessary prerequisites pertaining to our study. In Section \ref{sec3}, we establish the relation between the $\vnum$-number and the regularity of a wide range of Gorenstein and level ideals. Section \ref{sec4} delves into an examination of the $\vnum$-number of Frobenius power of graded ideals. Finally, in Section \ref{sec:question}, we pose some questions for potential future investigation.

\section{Preliminaries}\label{preli}
A \emph{monomial} in the polynomial ring $S$ is defined as a polynomial of the form $x_{1}^{a_{1}}\cdots x_{n}^{a_{n}}$, where each $a_{i}$ is a non-negative integer. A \emph{monomial ideal} $I\subset S$ is defined as an ideal that is generated by a set of monomials in the ring $S$. The set of minimal monomial generators of $I$ is unique, and if it consists of square-free monomials, then we say $I$ is a \emph{square-free} monomial ideal.
Let $G=(V(G),E(G))$ be a simple graph with $V(G)=\{x_1,\ldots,x_n\}$. Then the \emph{edge ideal} of $G$, denoted by $I(G)$, is a square-free monomial ideal in $S$ defined as $I(G):=(\{x_{i}x_{j}\mid \{x_{i},x_{j}\}\in E(G)\})$.

The \emph{height} (respectively, \emph{big height}) of an ideal $I\subset S$, denoted by $\height(I)$ (respectively, $\bight(I)$), is the minimum (respectively, maximum) height among all the associated prime of $I$. The ideal $I$ is said to be \emph{unmixed} if $\height(I)=\bight(I)$. An ideal $I\subset S$ is called a \emph{complete intersection} if $I$ is generated by a regular sequence. If $I$ is a complete intersection, then the height of $I$ is the cardinality of a minimal generating set of $I$. The following observation of the ideal generated by linear forms is widely known. For the reader's benefit, we provide a short proof here.
\begin{remark}
	\label{rmk:ci}
	{\rm Let $I\subset S$ be a graded ideal minimally generated by linear forms. Then $I$ is a complete-intersection prime ideal. Moreover $\reg S/I =0$.}
\end{remark}
\begin{proof}
	Let $l_1,\ldots,l_m$ are the linear forms that minimally generate the ideal $I$. Without loss of generality we may assume $l_i=x_i+c_i$ for all $1\leq i\leq m$, where $c_1,\ldots,c_m$ are linear polynomials involving none of the variables $x_1,\ldots,x_m$ \cite[Exercise 10]{CLO2015_ideals}. Now consider  the automorphism $\phi:S\to S$ defined as 
	$$\phi(x_i)=\begin{cases}
		x_i-c_i&\text{if } 1\leq i\leq m;\\
		x_i&\text{else.}
	\end{cases}$$ Rest of the proof follows from the fact that $\phi(I)=(x_1,\ldots,x_m)$.
\end{proof}

Let $I\subset S$ be a graded ideal and denote $R:=S/I$. Let $M=\bigoplus_{i\in \mathbb{Z}} M_i$ be a finitely generated graded $S$-module. Let $\alpha(M)$ denote the minimum degree of a non-zero element in $M$, that is $\alpha(M)=\min\{i\mid M_i\neq 0 \}$. For an integer $j$, the $j$-th \emph{shift} module $M(j)$ is defined by the grading $M(j)_i=M_{i+j}$. The \emph{Hilbert series} of $M$ defined by $H(M,t)=\sum_{i}\dim_KM_it^i$ is a power series in $\mathbb{Z}[t,t^{-1}]$. If $M$ is positively graded, then the Hilbert series of $M$ can be written as $H(M,t)=\frac{h(t)}{(1-t)^{\dim M}}$ for some polynomial $h(t)\in \mathbb{Z}[t]$.

Let $I\subset S$ be a graded ideal and let $R:=S/I$ admit the following minimal free resolution:
		\begin{align*}
			\mathbf{F}_{\bullet}: \quad	0\rightarrow F_c\rightarrow \cdots \rightarrow F_1\rightarrow F_0\rightarrow 0.
		\end{align*}
So $F_0=S$ and since $R$ is graded, for each $1\leq i\leq c$, $F_i$ is of the form: $F_i=\bigoplus_{j} S(-j)^{\beta_{i,j}}$ for some integers $j,\beta_{i,j}$. The number $\beta_{i,j}$ is called the $(i,j)$-th graded \emph{Betti number} of $R$.
	The \emph{Castelnuovo-Mumford regularity} of $R$ (in short, \emph{regularity} of $R$) is denoted by $\reg R$ and defined as follows
	\begin{align*}
		 \reg R:=\max\left\{ j-i\mid \beta_{i,j}\neq 0\right\}.
	\end{align*}
		The \textit{projective dimension} of $R$, denoted by $\Projdim R$, is defined as follows
		\begin{align*}
			\Projdim R:=\max\left\{i\mid \beta_{i,j}\neq 0 \text{ for some } j\right\} =c.
		\end{align*}
In the following Discussion \ref{disc1}, we assume $R$ to be Cohen-Macaulay. 

\begin{discussionbox}\label{disc1}{\rm
Since $R$ is Cohen-Macaulay, by the Auslander-Buchsbaum theorem, we get $\Projdim R= \height (I)$. The canonical module of $R$, denoted as $\omega_R$, can be defined as $\omega_R=\Ext^c_S(R,S)$ \cite[Theorem 3.3.7]{bruns_herzog_1998}. More precisely, let $\mathbf{G}_{\bullet}=\Hom_S(F_{\bullet},S)$ be the following dual complex
	\begin{align*}
		\mathbf{G}_{\bullet}: \quad	0\rightarrow G_c\rightarrow \cdots \rightarrow G_1\rightarrow G_0\rightarrow 0,
	\end{align*}
where $G_i=\Hom_S(F_{c-i},S)$ for $0\leq i \leq c$. Then $\mathbf{G}_{\bullet}$ is the minimal free resolution of $\omega_R$ \cite[Corollary 3.3.9]{bruns_herzog_1998}. Define the \emph{$a$-invariant} of $R$ as
	\begin{align*}
		a(R):=-\min \left\{i \mid \left[\omega_R \right]_i\neq 0 \right\}.
	\end{align*}
	The ring $R$ is said to be \emph{Gorenstein} (sometimes, we say $I$ is Gorenstein) if its canonical module is cyclic, i.e. generated by a single element. This is the same as saying the rank of $G_0$ is one or, equivalently, $F_c=S(-(c+\reg R))$. The ring $R$ is said to be a \emph{level} ring (sometimes, we call $I$ is level) if every element in a minimal set of generators of the canonical module possesses the same degree. This is equivalent to the fact that $G_0$ has a basis consisting of same degree elements or equivalently, $F_c$ is of the form $F_c=S(-(c+\reg R))^{\beta_{c,c+\reg R}}$.

	The first syzygy of the canonical module denoted as $\mathrm{Syz}^1_S(\omega_R)$, is the kernel of the map $G_0\to \omega_R$. 
	
	Let $I\subsetneq J\subset S$ be two ideals such that $I$ is Gorenstein. Then $I:(I:J)=J$.
	}
\end{discussionbox}

The following observation of $\vnum$-number is utilized multiple times in the proofs, and hence, it is explicitly stated here for clarity.
\begin{remark}
	\label{remark:v_Iless_deg_f}
	{\rm Let $I\subset S$ be a graded non-prime ideal. Let $f\in S\setminus I$ be a homogeneous element such that $f\frakp \subseteq I$ for some associated prime $\frakp$ of $I$. If $\frakp$ is not contained in any associated prime of $I$, then $I:f=\frakp$, and hence $\vnum(I)\leq  \vnum_{\frakp}(I)\leq \deg f$.}
\end{remark}
\begin{proof}
	Since $f\notin I$, $I:f$ is a proper ideal of $S$. Let $\mathfrak{p'}\in \Ass(I:f)$. Then $\frakp \subseteq I:f \subseteq \mathfrak{p'}$. But, $\Ass(I:f)\subset \Ass(I)$ and hence, $\frakp=\mathfrak{p'}$. Therefore, $I:f=\frakp$.
\end{proof}

\section{The $\vnum$-number of Gorenstein and level ideals}\label{sec3}

In this section, we establish a relation between the $\vnum$-number and Castelnuovo-Mumford regularity of certain classes (including the class of monomial ideals) of Gorenstein and level algebras.\par 

The following result from Peskine and Szpiro serves as the foundation for the notion of algebraic linkage theory \cite[Proposition 2.6]{PeskineSzpiroLiaison1974}. To accomplish our goals, a slight modification of the result is necessary, as described in \cite[Section 1]{Kustin1984DeformationAL}.
\begin{proposition}
	\label{prop:gor_linkage}
	Let $I\subsetneq J$ be two homogeneous ideals of the same projective dimension $c$ such that the quotient rings $S/I$ and $S/J$ are Gorenstein. Let $\mathbf{F}_{\bullet}$ and $\mathbf{G}_{\bullet}$ be the minimal homogeneous free resolutions of $S/I$ and
	$S/J$, respectively, and let $\pi_\bullet:\mathbf{F}_{\bullet}\to \mathbf{G}_{\bullet}$ be a homogeneous map of resolutions which extends the natural surjective map $\pi:S/I\to S/J$. Since $S/I,S/J$ are Gorenstein, $F_c=S(-(c+r_I))$ and $G_c=S(-(c+r_J))$, where $r_I$ and $r_J$ are the regularity of $S/I$ and $S/J$ respectively. So the map $\pi_c : F_c\to  G_c$ is multiplication by a homogeneous element $f$ of $S$ and $\deg f=r_J-r_I$.
	Then $I:J=(I,f)$ and $I:f=J$.
\end{proposition}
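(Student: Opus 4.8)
The plan is to extract the essential ring-theoretic content from linkage theory—specifically the identity $I:(I:J)=J$ recorded in Discussion \ref{disc1}—and combine it with a degree count coming from the Gorenstein shape of the minimal free resolutions. First I would set up the map of resolutions $\pi_\bullet\colon\mathbf{F}_\bullet\to\mathbf{G}_\bullet$ lifting the surjection $\pi\colon S/I\twoheadrightarrow S/J$ (this exists and is unique up to homotopy by the standard comparison theorem for projective resolutions, and it is homogeneous of degree $0$ since $\pi$ is). Because $S/I$ and $S/J$ are Gorenstein of the same projective dimension $c$, Discussion \ref{disc1} gives $F_c=S(-(c+r_I))$ and $G_c=S(-(c+r_J))$, so $\pi_c$ is multiplication by a single homogeneous element $f\in S$; comparing the internal degrees of source and target forces $\deg f=(c+r_J)-(c+r_I)=r_J-r_I$.

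Next I would prove the two ideal identities. For $I:f=J$: dualizing the map of resolutions into $S$ and using $\omega_{S/I}=\Ext^c_S(S/I,S)$, $\omega_{S/J}=\Ext^c_S(S/J,S)$ (Discussion \ref{disc1}), the transpose of $\pi_\bullet$ induces the natural inclusion $\omega_{S/J}\hookrightarrow\omega_{S/I}$ of canonical modules, and at the top it is again multiplication by $f$. The cokernel of $\omega_{S/J}\to\omega_{S/I}$ is, by the long exact sequence of $\Ext$ applied to $0\to J/I\to S/I\to S/J\to0$, identified with $\Ext^{c+1}_S(J/I,S)$, whose annihilator pins down $I:f$; carrying this through yields $I:f=J$. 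For $I:J=(I,f)$: the inclusion $(I,f)\subseteq I:J$ is immediate once $fJ\subseteq I$ is known, and $fJ\subseteq I$ follows because $f$ kills the canonical module quotient. For the reverse inclusion $I:J\subseteq(I,f)$, I would use that $I$ is Gorenstein so $I:(I:J)=J$, together with the fact that $(I,f)$ and $I:J$ are linked ideals of the same (pure) codimension $c$, and that $(I,f)\subseteq I:J$ between two ideals with $S/(I,f)$ and $S/(I:J)$ of the same length in each relevant graded piece forces equality. Alternatively, and more cleanly, once $I:f=J$ is established one has $I:(I:f)=I:J$, and $I:(I:f)=(I,f)$ is exactly the Gorenstein linkage identity (this is the $J\rightsquigarrow f$ instance of $I:(I:\cdot)$), giving $I:J=(I,f)$ directly.

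The main obstacle I anticipate is the verification that the dualized map of resolutions really does induce the canonical inclusion $\omega_{S/J}\hookrightarrow\omega_{S/I}$ with top component exactly $f$, rather than $f$ times a unit or $f$ up to a boundary—i.e., controlling the homotopy ambiguity in $\pi_\bullet$ so that it does not spoil the identification of $f$ as a genuine non-zerodivisor-type element linking the two ideals. This is handled by noting that at the top of the resolution ($F_c\to G_c$, both free of rank one) there are no higher homotopies to absorb, so $\pi_c$ is literally well-defined up to a unit scalar, and minimality of the resolutions rules out the scalar being a non-unit; the element $f$ is then canonical up to a unit, which is all that is needed. A secondary technical point is checking that $f\notin I$ (so that $I:f$ is a proper ideal and the statement is non-vacuous), which follows since otherwise $\pi_c$ would factor through $I\cdot G_c$, contradicting that $\pi_\bullet$ lifts the nonzero surjection $\pi$. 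I would cite \cite[Proposition 2.6]{PeskineSzpiroLiaison1974} and \cite[Section 1]{Kustin1984DeformationAL} for the clean packaging and only sketch the degree bookkeeping in detail.
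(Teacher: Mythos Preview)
The paper does not give its own proof of this proposition; it is quoted from the linkage literature (Peskine--Szpiro \cite{PeskineSzpiroLiaison1974} and Kustin--Miller \cite{Kustin1984DeformationAL}) and used as a black box. Your sketch therefore goes beyond what the paper itself offers, and the strategy you outline---dualize the comparison map $\pi_\bullet$, identify the image of $\omega_{S/J}\to\omega_{S/I}\cong S/I$ as generated by the class of $f$, and then pass between the two colon identities via the Gorenstein double-annihilator formula $I:(I:J)=J$ recorded in Discussion~\ref{disc1}---is precisely the content of those references.

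Two small corrections to your write-up, neither of which is a genuine gap. First, in the long exact sequence for $0\to J/I\to S/I\to S/J\to 0$, the cokernel of $\omega_{S/J}\hookrightarrow\omega_{S/I}$ is $\Ext^{c}_S(J/I,S)$, not $\Ext^{c+1}_S(J/I,S)$: the next term $\Ext^{c+1}_S(S/J,S)$ vanishes because $S/J$ is Cohen--Macaulay of codimension $c$. Second, the identity that drops out directly from the dualization is $I:J=(I,f)$, not $I:f=J$: the image of $\omega_{S/J}$ in $\omega_{S/I}\cong S/I$ is classically $(I:J)/I$, and here that image is visibly the cyclic module generated by the class of $f$. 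The identity $I:f=J$ then follows from $I:(I:J)=J$ applied with $J$ itself (since $I:(I,f)=I:f$), which is the reverse of the order you propose. Your ``alternative'' route via $I:(I:(I,f))=(I,f)$ is also correct once $f\notin I$ is checked, so either direction closes the loop.
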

We now employ the notion of linkage in order to establish the main result of this section.
\begin{theorem}
	\label{thm:gor_vnum_equal_reg}
	Let $I\subset S$ be a graded ideal such that $S/I$ is a Gorenstein algebra. If $\frakp\in \Ass(S/I)$ is generated by linear forms, then $\vnum_{\frakp}(I)=\reg S/I$. In particular, if all the associated primes of $I$ are generated by linear forms, then $\vnum(I)=\reg S/I$.
\end{theorem}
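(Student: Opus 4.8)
The plan is to prove the local equality $\vnum_{\frakp}(I)=\reg S/I$ by applying the linkage result of Proposition~\ref{prop:gor_linkage} to the pair $I\subseteq\frakp$, and then to read off the stated global equality by minimizing over $\Ass(I)$. First I would dispose of the degenerate case $I=\frakp$: then $I$ is prime, so $\vnum(I)=\vnum_{\frakp}(I)=0$ and $\reg S/I=0$ by Remark~\ref{rmk:ci}. So from now on assume $I\subsetneq\frakp$ (the inclusion $I\subseteq\frakp$ is automatic since $\frakp$ is an associated prime of $I$).

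Next I would assemble the hypotheses of Proposition~\ref{prop:gor_linkage}. Since $S/I$ is Gorenstein it is Cohen-Macaulay and unmixed, so every associated prime of $I$ has height $\height I$; in particular $\height\frakp=\height I$, and by the Auslander-Buchsbaum formula $\Projdim S/I=\height I$. On the other hand, because $\frakp$ is generated by linear forms, Remark~\ref{rmk:ci} shows $S/\frakp$ is a complete intersection (hence Gorenstein) with $\reg S/\frakp=0$ and $\Projdim S/\frakp=\height\frakp$. Setting $c:=\height I=\height\frakp$, we therefore have $\Projdim S/I=c=\Projdim S/\frakp$ with both $S/I$ and $S/\frakp$ Gorenstein, so Proposition~\ref{prop:gor_linkage} applies to $I\subsetneq J:=\frakp$ and produces a homogeneous $f_0\in S$ with $I:f_0=\frakp$ and $I:\frakp=(I,f_0)$. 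Its degree is extracted from the last comparison map $F_c=S(-(c+\reg S/I))\to G_c=S(-(c+\reg S/\frakp))=S(-c)$, which is multiplication by a form of degree $(c+\reg S/I)-c=\reg S/I$; hence $\deg f_0=\reg S/I$, and therefore $\vnum_{\frakp}(I)\leq\deg f_0=\reg S/I$.

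For the reverse inequality, let $f\in S$ be homogeneous with $I:f=\frakp$ and put $d=\deg f$. From $f\frakp\subseteq I$ we get $f\in I:\frakp=(I,f_0)=I+Sf_0$; writing $f=\iota+g f_0$ with $\iota\in I$ and $g\in S$ and passing to degree-$d$ homogeneous components, we may take $\iota$ homogeneous of degree $d$ and $g$ homogeneous of degree $d-\reg S/I$. If $g=0$ then $f=\iota\in I$, contradicting that $I:f$ is a proper ideal; hence $g\neq 0$, and since $\deg g=d-\reg S/I\geq 0$ we obtain $d\geq\reg S/I$. Thus $\vnum_{\frakp}(I)\geq\reg S/I$, and combined with the previous step, $\vnum_{\frakp}(I)=\reg S/I$. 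For the closing assertion, if every associated prime of $I$ is generated by linear forms (for example if $I$ is a monomial ideal, whose associated primes are generated by subsets of the variables), the local equality holds at each $\frakp\in\Ass(I)$, whence $\vnum(I)=\min_{\frakp\in\Ass(I)}\vnum_{\frakp}(I)=\reg S/I$.

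The only step I expect to need genuine care is checking that Proposition~\ref{prop:gor_linkage} really applies: one must use that $\frakp$, as a minimal prime of the unmixed Gorenstein ring $S/I$ and a complete intersection of linear forms, has projective dimension exactly $c=\Projdim S/I$, and one must be attentive when reading $\deg f_0$ off the final comparison map (what matters is that it equals $\reg S/I-\reg S/\frakp=\reg S/I\geq 0$, so no negativity issue arises); everything else is elementary degree bookkeeping. As a linkage-free alternative one could instead observe, using the unmixedness of $S/I$, that $\vnum_{\frakp}(I)$ is exactly the least degree of a nonzero homogeneous element of $(I:\frakp)/I$ (any such element $g$ automatically has $I:g=\frakp$, since $S/(I:g)$ embeds in $S/I$ and hence has only height-$c$ associated primes), then identify $(I:\frakp)/I\cong\Hom_{S/I}(S/\frakp,S/I)$ and apply Gorenstein duality to get $(I:\frakp)/I\cong(S/\frakp)(-\reg S/I)$, whose minimal generator sits in degree $\reg S/I$.
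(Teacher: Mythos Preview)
Your proof is correct and follows the same approach as the paper: both apply Proposition~\ref{prop:gor_linkage} to the pair $I\subsetneq\frakp$ (after disposing of the case $I=\frakp$) to obtain a homogeneous $f_0$ with $I:f_0=\frakp$ and $\deg f_0=\reg S/I$. Your write-up is in fact more careful than the paper's, which jumps directly from the existence of $f_0$ to $\vnum_{\frakp}(I)=\deg f_0$ without spelling out the reverse inequality; your argument that any $f$ with $I:f=\frakp$ lies in $I:\frakp=(I,f_0)$ and hence has degree at least $\deg f_0$ fills that gap, and your closing duality-based alternative is a nice bonus not present in the paper.
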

\begin{proof}
	If $I$ is itself a prime ideal generated by linear forms, then $I$ is a complete intersection and hence $\reg S/I =0$. Of course, if $I$ is a prime ideal, then $\vnum(I)=0$.
	
	So, assume $I$ is not a prime ideal. If $\frakp \in \Ass (S/I)$ is generated by linear forms, then $\frakp$ is a complete intersection and $\reg S/\frakp=0$ (Remark \ref{rmk:ci}). Note that they have the same projective dimension, which is equal to $\height(I)$. By Proposition~\ref{prop:gor_linkage}, there exist a homogeneous element $f$ of degree $\reg S/I$ such that $I:\frakp = (I,f)$, and $I:f=\frakp$. Hence $\vnum_{\frakp}(I)=\deg f = \reg S/I$.
	
	Since $\vnum(I)=\min\{\vnum_{\frakp}(I): \frakp \in \Ass (S/I)  \}$, the second assertion follows immediately.
\end{proof}
\begin{remark}{\rm
	If $I$ is a monomial graded ideal such that $S/I$ is Gorenstein, then $\vnum(I)=\reg S/I$. Since the primary decomposition of the monomial ideal is independent of the characteristic of the field, so is the $\vnum$-number. Thus, if $S/I$ is Gorenstein, then regularity is also independent of the characteristic of the field.}
\end{remark}
If $I\subset S$ be a graded ideal such that $S/I$ is a Gorenstein algebra and $\frakp\in \Ass(S/I)$ be such that $\frakp$ is generated by linear forms, then $\vnum_{\frakp}(I)=\reg S/I$ by Theorem \ref{thm:gor_vnum_equal_reg}. However, $\vnum(I)$ might be strictly less than $\reg S/I$. For instance, we consider the following example of Gorenstein binomial edge ideals.
\begin{example}
	{\rm 
	Let $G$ be a simple graph with $V(G)=\{1,\ldots,n\}$. Then the binomial edge ideal of $G$, denoted by $J_{G}$, is defined as follows:
$$J_{G}:=(\{x_iy_j-x_jy_i\mid \{i,j\}\in E(G)\text{  with  } i<j\})$$
in the polynomial ring $R=K[x_1,\ldots,x_n,y_1,\ldots,y_n]$. It has been proved in \cite[Theorem A]{rene21} that the only Gorenstein binomial edge ideals are the binomial edge ideals of path graphs. Now, consider the path graph $P_{2k}$ of even length $2k$. Then, $V(P_{2k})=\{1,\ldots,2k+1\}$. From the primary decomposition of binomial edge ideal given in \cite{HHHKR10binom}, it follows that $P_{2k}$ has an associated prime ideal generated in linear form and that is $\mathfrak{p}=(x_2,y_2,x_4,y_4,\ldots, x_{2k},y_{2k})$. Also, by \cite[Corollary 2.7]{EZ15regularity_path}, we have $\reg R/J_{P_{2k}}=2k$. Thus, by Theorem \ref{thm:gor_vnum_equal_reg}, we get $\vnum_{\mathfrak{p}}(J_{P_{2k}})=2k$. While, we can observe using Macaulay2 \cite{M2}, $\vnum(J_{P_{6}})=4<\reg R/J_{P_{6}}$.}
\end{example}

The Nagata idealization (also known as trivial extension) provides a valuable method for constructing Gorenstein rings from level rings. Consequently, we employ this technique to investigate the $\vnum$-number of level rings. Let $R=S/I$ be a standard graded level algebra and $\omega_R$ be the canonical module. Denote the $a$-invariant of $R$ as $a$. Consider the following ring obtained from Nagata idealization with its canonical module:
\begin{align*}
	\widetilde{R}:=R\ltimes \omega_R(-a-1).
\end{align*}
The addition and multiplication structure is given by $(r_1,z_1)+(r_2,z_2)=(r_1+r_2,z_1+z_2)$ and $(r_1,z_1)\cdot (r_2,z_2)=(r_1 r_2,r_1z_2+r_2z_1)$, for all $r_i\in R,z_i\in \omega_R(-a-1),i=1,2$. We state some observations of $\widetilde{R}$ here. Readers are encouraged to review Section 3 of \cite{MSS21quadratic_gorenstein} in order to enhance their comprehension.
\begin{proposition}
	\label{prop:level_to_gor}
	Assume the above setup. The following statements hold:
	\begin{enumerate}
		\item\label{level:gor} $\widetilde{R}$ is a standard graded Gorenstein algebra.
		\item\label{level:description} If $\omega_R$ is minimally generated by $m$ elements, then
		\begin{align*}
			\widetilde{R}\simeq \frac{S[y_1,\ldots,y_m]}{I+\mathcal{L}+(y_1,\ldots,y_m)^2},
		\end{align*}
		where $\mathcal{L}=(\sum f_iy_i : f_1,\ldots f_m \in \mathrm{Syz}^1_S(\omega_R))$.
		\item \label{level:hilbert} If the Hilbert series of $R$ is $H(R,t)=\frac{\sum_{i=0}^{r}a_it^i}{(1-t)^d}$, with $a_r\neq 0$, then the Hilbert series of $\widetilde{R}$ is $H(\widetilde{R},t)=\frac{a_0+\sum_{i=1}^{r}(a_i+a_{r-i})t^i+a_rt^{r+1}}{(1-t)^d}$, where $d=\dim R$.
		\item\label{level:regularity} $\reg \widetilde{R}=\reg R+1$.
	\end{enumerate}
\end{proposition}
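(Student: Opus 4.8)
The plan is to verify the four assertions by appealing to the standard theory of Nagata idealization, following the treatment in Section 3 of \cite{MSS21quadratic_gorenstein}, and then to carry out the bookkeeping with the grading shift $\omega_R(-a-1)$. For \eqref{level:gor}, I would first recall that for any Cohen--Macaulay ring $R$ with canonical module $\omega_R$, the idealization $R\ltimes \omega_R$ is Gorenstein (this is the classical fact that $\omega_{R\ltimes M}\cong \Hom_R(M,\omega_R)$ when $R\ltimes M$ is Cohen--Macaulay, so taking $M=\omega_R$ gives $\omega_{R\ltimes\omega_R}\cong \Hom_R(\omega_R,\omega_R)\cong R$, i.e.\ the canonical module is free of rank one). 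The only extra point is that we want $\widetilde{R}$ to be \emph{standard} graded: the shift $\omega_R(-a-1)$ is chosen precisely so that the generators of $\omega_R(-a-1)$ sit in degree $\geq 1$; since $R$ is level with $a$-invariant $a$, the minimal generators of $\omega_R$ all lie in degree $-a$, hence the minimal generators of $\omega_R(-a-1)$ all lie in degree $1$, and together with the degree-one generators of $R$ (coming from $S$) this exhibits $\widetilde{R}$ as generated in degree one over $K$.

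For \eqref{level:description}, the idea is explicit: writing $\omega_R$ with minimal generators $g_1,\dots,g_m$ in degree $-a$, map $y_i\mapsto (0,g_i)\in \widetilde{R}$ and $x_j\mapsto (\bar x_j,0)$. This is a surjective $K$-algebra homomorphism $S[y_1,\dots,y_m]\to\widetilde{R}$; one checks it kills $I$ (since the first coordinate lands in $R$), kills $(y_1,\dots,y_m)^2$ (products of two elements of $0\oplus\omega_R$ vanish in the idealization), and kills $\mathcal{L}$ (a syzygy $\sum f_ig_i=0$ in $\omega_R$ becomes $\sum f_i y_i\mapsto (0,\sum f_i g_i)=0$). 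A dimension count, degree by degree, using the presentation $G_0\twoheadrightarrow \omega_R$ with kernel $\mathrm{Syz}^1_S(\omega_R)$, shows the kernel is exactly $I+\mathcal{L}+(y_1,\dots,y_m)^2$; this is the one place where a small amount of care is needed, but it is essentially the observation that in each degree the idealization is $R_d\oplus(\omega_R)_{d+a+1}$ and the displayed quotient has the matching Hilbert function.

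Assertion \eqref{level:hilbert} is then a direct consequence: as graded $K$-vector spaces $\widetilde{R}=R\oplus\omega_R(-a-1)$, so $H(\widetilde{R},t)=H(R,t)+t^{a+1}H(\omega_R,t)$, and by graded local duality $H(\omega_R,t)=(-1)^dH(R,t^{-1})$, which for $H(R,t)=\frac{\sum_{i=0}^r a_i t^i}{(1-t)^d}$ gives $H(\omega_R,t)=\frac{\sum_{i=0}^r a_i t^{-i+?}}{(1-t)^d}$ after clearing; matching the known value $a=r-d$ of the $a$-invariant of a Cohen--Macaulay ring with $h$-polynomial of degree $r$, the shift $t^{a+1}$ realigns the reversed numerator $\sum a_i t^{r-i}$ so that the two numerators add to $a_0+\sum_{i=1}^r(a_i+a_{r-i})t^i+a_r t^{r+1}$, as claimed. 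Finally \eqref{level:regularity}: since $\widetilde{R}$ is Cohen--Macaulay of the same dimension $d$ as $R$, its regularity equals $\deg h_{\widetilde{R}}(t)=r+1$, while $\reg R=\deg h_R(t)=r$; hence $\reg\widetilde{R}=\reg R+1$. I expect the main obstacle to be the careful verification of the kernel in \eqref{level:description} and the precise tracking of the grading shifts (the interplay between $a=r-d$, the shift $-a-1$, and local duality) in \eqref{level:hilbert}; everything else is a citation to \cite{MSS21quadratic_gorenstein} or a formal consequence of Cohen--Macaulayness.
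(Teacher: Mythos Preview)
Your proposal is correct and follows essentially the same route as the paper: parts \eqref{level:gor} and \eqref{level:description} are handled there by direct citation (to Reiten and to \cite[Lemma 3.3]{MSS21quadratic_gorenstein}, respectively), whereas you spell out the underlying arguments, and your treatment of \eqref{level:hilbert} and \eqref{level:regularity} via $H(\widetilde{R},t)=H(R,t)+t^{a+1}H(\omega_R,t)$, local duality, the identity $a=r-d$, and the fact that for Cohen--Macaulay rings the regularity equals the degree of the $h$-polynomial, matches the paper's proof line for line.
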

\begin{proof}
	$(\ref{level:gor}):$ Proved in \cite[Theorem 7]{reiten72gorenstein}.
	
	$(\ref{level:description}):$ Shown as a part of the \cite[Lemma 3.3]{MSS21quadratic_gorenstein}.
	
	$(\ref{level:hilbert})$ First notice that $\dim \widetilde{R}=\dim R =d$ (say) \cite[Exercise 3.3.22]{bruns_herzog_1998}.
	Since $\dim_k\widetilde{R}_i=\dim_k R_i+\dim_k \omega_R(-a-1)_i$, so the Hilbert series $H(\widetilde{R},t)=H(R,t)+t^{a+1}H(\omega_R,t)$. Also $H(\omega_R,t)=(-1)^dH(R,t^{-1})$ \cite[Corollary 4.4.6]{bruns_herzog_1998}. Thus $H(\widetilde{R},t)=\frac{\sum_{i=0}^{r}a_it^i}{(1-t)^d}+t^{a+1+d-r}\frac{\sum_{i=0}^{r}a_{r-i}t^i}{(1-t)^d}$. The result follows immediately from the fact $a=r-d$.
	
	$(\ref{level:regularity})$ For a Cohen-Macaulay ring, the regularity is the degree of the polynomial in the numerator of the Hilbert series.  Hence $\reg(\widetilde{R})=\reg(R)+1$. 
\end{proof}

\begin{theorem}
	\label{thm:level_vnum}
	Let $I\subset S$ be a graded ideal such that $S/I$ is a level ring. If $\frakp\in \Ass(S/I)$ is generated by linear forms, then $\vnum_{\frakp}(I)\geq \reg S/I$. In particular, if all the associated primes of $I$ are generated by linear forms, then $\vnum(I)\geq \reg S/I$.
\end{theorem}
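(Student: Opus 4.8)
The plan is to reduce the level case to the Gorenstein case of Theorem~\ref{thm:gor_vnum_equal_reg} via the Nagata idealization. Write $R=S/I$ (which is Cohen--Macaulay, since level algebras are) and $a=a(R)$. If $I$ is prime then $\Ass(S/I)=\{I\}$, so $\mathfrak{p}=I$ is generated by linear forms, whence $\reg S/I=0$ by Remark~\ref{rmk:ci} and $\vnum_{\mathfrak{p}}(I)=\vnum(I)=0$; so we may assume $I$ is not prime. Consider $\widetilde{R}=R\ltimes\omega_R(-a-1)$. By Proposition~\ref{prop:level_to_gor} it is a standard graded Gorenstein algebra with $\reg\widetilde{R}=\reg R+1$, and $\widetilde{R}\cong S[y_1,\dots,y_m]/\widetilde{I}$ with $\widetilde{I}=IS[\mathbf{y}]+\mathcal{L}+(\mathbf{y})^2$, where $m$ is the number of minimal generators of $\omega_R$; since $R$ is level these all lie in the single degree $-a$, so each $y_i$ is a linear form of $S[\mathbf{y}]$, and in particular $S[\mathbf{y}]$ is a standard graded polynomial ring.

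Next I would verify that $\widetilde{\mathfrak{p}}:=\mathfrak{p}S[\mathbf{y}]+(y_1,\dots,y_m)$ is an associated prime of $\widetilde{I}$ generated by linear forms, so that Theorem~\ref{thm:gor_vnum_equal_reg} applies to the pair $(\widetilde{I},\widetilde{\mathfrak{p}})$. It is prime and generated by linear forms, it contains $\widetilde{I}$, and since $\dim\widetilde{R}=\dim R$ one gets $\height\widetilde{\mathfrak{p}}=\height\mathfrak{p}+m=\height I+m=\height\widetilde{I}$; hence $\widetilde{\mathfrak{p}}$ is a minimal prime of $\widetilde{I}$, in particular associated. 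Theorem~\ref{thm:gor_vnum_equal_reg} then yields $\vnum_{\widetilde{\mathfrak{p}}}(\widetilde{I})=\reg\widetilde{R}=\reg R+1$, so it remains only to prove the inequality $\vnum_{\widetilde{\mathfrak{p}}}(\widetilde{I})\le\vnum_{\mathfrak{p}}(I)+1$.

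For this, choose a homogeneous $f\in S_d$ with $d=\vnum_{\mathfrak{p}}(I)$ and $I:f=\mathfrak{p}$, so that $\overline{f}\neq 0$ in $R$ and $\operatorname{Ann}_R(\overline{f})=\mathfrak{p}/I$. The crucial point is that $\omega_R$ is a faithful $R$-module (as $R$ is Cohen--Macaulay, $R\to\Hom_R(\omega_R,\omega_R)$ is an isomorphism), and since $\omega_R$ is generated in the single degree $-a$, $\overline{f}\,\omega_R\neq 0$ forces $\overline{f}\cdot[\omega_R]_{-a}\neq 0$. As $g_1,\dots,g_m$ (the generators of $\omega_R$ corresponding to $y_1,\dots,y_m$) span $[\omega_R]_{-a}$, there are scalars $c_1,\dots,c_m\in K$ with $\overline{f}\cdot\sum_i c_i g_i\neq 0$ in $\omega_R$. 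Put $\ell=\sum_i c_iy_i\in S[\mathbf{y}]_1$ and $F=\ell f$, homogeneous of degree $d+1$. Under the identification $\widetilde{R}=R\oplus\omega_R(-a-1)$, $F$ maps to $\bigl(0,\ \overline{f}\cdot\sum_i c_ig_i\bigr)\neq 0$, so $F\notin\widetilde{I}$; moreover $F\widetilde{\mathfrak{p}}=\ell f\mathfrak{p}+\sum_j\ell y_jf\subseteq \ell I+f(\mathbf{y})^2\subseteq\widetilde{I}$, using $f\mathfrak{p}\subseteq I$ and $\ell y_j\in(\mathbf{y})^2$. Since $\widetilde{I}$ is unmixed ($\widetilde{R}$ is Cohen--Macaulay) no associated prime of $\widetilde{I}$ properly contains another, and $\widetilde{I}$ is not prime (as $\widetilde{R}$ has the nonzero square-zero ideal $0\oplus\omega_R(-a-1)$), so Remark~\ref{remark:v_Iless_deg_f} gives $\widetilde{I}:F=\widetilde{\mathfrak{p}}$; hence $\vnum_{\widetilde{\mathfrak{p}}}(\widetilde{I})\le d+1$. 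Combining with the previous paragraph, $\reg S/I+1\le\vnum_{\mathfrak{p}}(I)+1$, i.e.\ $\vnum_{\mathfrak{p}}(I)\ge\reg S/I$; the final assertion is then immediate from $\vnum(I)=\min\{\vnum_{\mathfrak{q}}(I):\mathfrak{q}\in\Ass(I)\}$.

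I expect the main obstacle to be the construction of $F$ with degree exactly one more than $\vnum_{\mathfrak{p}}(I)$: one must show the ``extra'' factor can be taken to be a genuine linear form $\ell$, and this is precisely where the level hypothesis is used --- it guarantees $\omega_R$ is generated in a single degree, so that (together with faithfulness of $\omega_R$) a suitable $K$-linear combination $\ell$ of the $y_i$ exists. Verifying that $\widetilde{\mathfrak{p}}\in\Ass(\widetilde{I})$ and tracking the grading of the idealization are comparatively routine, and the passage from $\vnum_{\mathfrak{p}}$ to $\vnum$ is automatic.
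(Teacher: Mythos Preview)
Your proof is correct and follows the same overall route as the paper: pass to the Nagata idealization $\widetilde{R}\cong S[\mathbf{y}]/\widetilde{I}$, note that $\widetilde{\frakp}=\frakp+(\mathbf{y})$ is an associated prime generated by linear forms, apply Theorem~\ref{thm:gor_vnum_equal_reg} to get $\vnum_{\widetilde{\frakp}}(\widetilde{I})=\reg R+1$, and then bound $\vnum_{\widetilde{\frakp}}(\widetilde{I})\le\vnum_{\frakp}(I)+1$. You also supply the justification (via a height count) that $\widetilde{\frakp}\in\Ass(\widetilde{I})$, which the paper simply asserts.

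The one genuine difference is in the last step. The paper argues by cases: either $fy_i\notin\widetilde{I}$ for some $i$ and one uses $fy_i$ as a witness, or $fy_i\in\widetilde{I}$ for all $i$ and then $f$ itself is a witness. You instead invoke faithfulness of $\omega_R$ to show directly that $\overline f\cdot g_i\neq 0$ for some $i$, so a witness $F=\ell f$ of degree $d+1$ always exists. In effect your argument shows that the paper's second case is vacuous (indeed, $fy_i\in\widetilde{I}$ for all $i$ would force $\overline f\,\omega_R=0$, hence $\overline f=0$). This is a mild streamlining rather than a different method; the level hypothesis enters in both proofs only through Proposition~\ref{prop:level_to_gor}, ensuring that the $y_i$ are linear.
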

\begin{proof}
Denote $S/I$ as $R$ and the canonical module as $\omega_R$. Following Proposition \ref{prop:level_to_gor}, $\widetilde{R}= \frac{S[y_1,\ldots,y_m]}{I+\mathcal{L}+(y_1,\ldots,y_m)^2}$ is a standard graded Gorenstein ring, where $\omega_R$ is minimally generated by $m$ elements and $\mathcal{L}=(\sum f_iy_i : f_1,\ldots ,f_m \in \operatorname{Syz}^1_S(\omega_R))$. Denote the ideal $I+\mathcal{L}+(y_1,\ldots,y_m)^2$ by $\widetilde{I}$. Let $\widetilde{\frakp}= \frakp+(y_1,\ldots,y_m)$. Note that $\widetilde{\frakp}$ is an associated prime of $\widetilde{I}$ generated by linear form. Then $\vnum_{\widetilde{\frakp}}(\widetilde{I})=\reg \widetilde{R}$ by Theorem \ref{thm:gor_vnum_equal_reg}. But $\reg \widetilde{R}=\reg R+1$.

Now let $f\in S$ such that $I:f=\frakp$ and $\deg f =\vnum_{\frakp}(I)$. Then $fy_i\widetilde{\frakp} \subseteq \widetilde{I}$ for all $1\leq i\leq m$. If $fy_i\notin \widetilde{I}$ for some $i$, then 
$\vnum_{\widetilde{\frakp}}(\widetilde{I})\leq \deg fy_i=\vnum_{\frakp}(I)+1$ (by Remark \ref{remark:v_Iless_deg_f}). Else, if $fy_i\in \widetilde{I}$ for all $1\leq i\leq m$, then $f\widetilde{\frakp}\subseteq \widetilde{I}$. However, it is important to note that $f$ is not an element of $I$, and hence it does not belong to $\widetilde{I}$ either. Hence 
$\vnum_{\widetilde{\frakp}}(\widetilde{I})\leq \deg f=\vnum_{\frakp}(I)$. In both cases $\vnum_{\widetilde{\frakp}}(\widetilde{I})\leq \vnum_{\frakp}(I)+1$. Therefore, we get $\vnum_{\frakp}(I) \geq \reg R $.
\end{proof}

Following results are due to \cite[Corollary 4.4, Theorem 4.10]{cstpv20}. As the results are relevant to Theorem \ref{thm:level_vnum}, we state here for the benefit of the readers.
\begin{theorem}
	Let $I\subset S$ be a graded ideal.
	\begin{enumerate}
		\item Assume $S/I$ is artinian. Then $\vnum(I)\leq \reg S/I$. Furthermore, the equality holds if and only if $S/I$ is a level algebra.
		\item Assume $\dim S/I=1$, $I$ is unmixed and all the associated primes of $I$ are minimally generated by linear forms. Then $\vnum(I)\leq \reg S/I$. Furthermore, the equality holds if $S/I$ is a level algebra. 
	\end{enumerate} 
\end{theorem}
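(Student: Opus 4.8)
My plan is to read both sides off the canonical module. Since $S/I$ is artinian, $\mathfrak{m}=(x_1,\dots,x_n)$ is its unique associated prime, so $\vnum(I)=\vnum_{\mathfrak{m}}(I)$. First I would observe that a homogeneous $f\in S_d\setminus I$ satisfies $I:f=\mathfrak{m}$ exactly when $\bar f$ is a nonzero socle element of $A:=S/I$ of degree $d$; hence $\vnum(I)=\min\{d:\socle(A)_d\neq0\}$. Next I would invoke graded Matlis duality: $\omega_A$ is (up to a fixed global twist) the graded dual of $A$, and $\socle(A)=\Hom_S(K,A)$ is graded-dual to the minimal-generator space $\omega_A/\mathfrak{m}\omega_A$, so $\socle(A)_d\neq0$ if and only if $\omega_A$ has a minimal generator in degree $-d$. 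Since $A$ is Cohen--Macaulay of dimension $0$, $\reg A=a(A)=-\min\{i:[\omega_A]_i\neq0\}$, and this minimum degree is itself attained at a minimal generator of $\omega_A$. Combining these, $\vnum(I)$ is the negative of the \emph{largest} minimal-generator degree of $\omega_A$ while $\reg A$ is the negative of the \emph{least} one; hence $\vnum(I)\leq\reg A$, with equality precisely when all minimal generators of $\omega_A$ lie in a single degree, i.e. when $A$ is level.

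\textbf{Part (2) (dimension one).} Here the plan is to reduce to Part (1) by a generic hyperplane section. First I would note that $I$ unmixed of dimension one forces $\mathfrak{m}\notin\Ass(S/I)$, so $\operatorname{depth}(S/I)\geq1=\dim(S/I)$ and $S/I$ is Cohen--Macaulay. I would then pick a general linear form $\ell$ (harmlessly extending $K$ if necessary); as $\ell$ avoids the finitely many height-$(n-1)$ associated primes of $I$, it is a nonzerodivisor on both $S/I$ and $\omega_{S/I}$, and $\bar A:=S/(I+(\ell))$ is artinian. Two standard facts then hold: $\reg\bar A=\reg(S/I)$ (regularity is unchanged modulo a linear nonzerodivisor), and $\bar A$ is level if and only if $S/I$ is, since $\omega_{\bar A}\cong\omega_{S/I}/\ell\,\omega_{S/I}$ up to a shift, which preserves the degrees of minimal generators. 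Granting the inequality $\vnum(I)\leq\vnum(I+(\ell))$, Part (1) applied to $\bar A$ gives $\vnum(I)\leq\vnum(I+(\ell))\leq\reg\bar A=\reg(S/I)$; and if $S/I$ is level then so is $\bar A$, so Part (1) yields $\vnum(I+(\ell))=\reg\bar A$, which together with $\vnum(I)\geq\reg(S/I)$ from Theorem~\ref{thm:level_vnum} (all associated primes of $I$ being linear) forces $\vnum(I)=\reg(S/I)$.

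\textbf{The main obstacle} is the inequality $\vnum(I)\leq\vnum(I+(\ell))$ (and I expect it is in fact an equality). The plan is to take a homogeneous $g$ of degree $\vnum(I+(\ell))$ representing a minimal-degree socle element of $\bar A$, so $\mathfrak{m}g\subseteq I+\ell S$ but $g\notin I+\ell S$. Since $\ell$ is a nonzerodivisor on $S/I$, for each $x_i$ there is a unique $u_i\in S/I$ with $\overline{x_ig}=\ell u_i$, and the relations $x_ju_i=x_iu_j$ exhibit ``$\bar g/\ell$'' as a genuine element of the saturation of $S/I$ lying outside $S/I$ yet carried into it by $\mathfrak{m}$. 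I would then localize at an associated prime $\mathfrak{p}$ of $I$ in the support of $\bar g$ and, using the $\mathfrak{p}$-primary decomposition $I=Q\cap I'$, check that $\vnum_{\mathfrak{p}}(I)$ is the least degree of an element of $(Q:\mathfrak{p})\cap I'$ not contained in $Q$, and that clearing $\ell$ from $g$ supplies such an element of degree $\leq\deg g$. Controlling the interaction between the primary component $Q$ at $\mathfrak{p}$ and the remaining components $I'$, and certifying that no degree increases, is the technical heart of the argument; a homological alternative would be to identify $\socle(\bar A)\cong\Ext^1_S(K,S/I)$ up to a shift from the sequence $0\to(S/I)(-1)\xrightarrow{\ell}S/I\to\bar A\to0$ and to extract the associated-prime witness directly from a minimal-degree class in that $\Ext$.
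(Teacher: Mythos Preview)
The paper does not supply a proof of this theorem: it is quoted verbatim from \cite[Corollary~4.4, Theorem~4.10]{cstpv20} and stated only for context after Theorem~\ref{thm:level_vnum}. So there is no in-paper argument to compare against, and I can only comment on your proposal on its own merits.

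\textbf{Part (1)} is correct. Your route through graded Matlis duality is fine, though more machinery than needed: since $A=S/I$ is artinian one has $\reg A=\max\{d:A_d\neq 0\}$ and $A_{\reg A}\subseteq\socle(A)$ automatically, so
\[
\vnum(I)=\min\{d:\socle(A)_d\neq 0\}\ \leq\ \max\{d:\socle(A)_d\neq 0\}=\reg A,
\]
with equality precisely when the socle is concentrated in one degree, i.e.\ when $A$ is level. No duality is required.

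\textbf{Part (2)} has a genuine gap, and you have located it yourself: the inequality $\vnum(I)\leq\vnum(I+(\ell))$ is asserted but not proved. The \emph{easy} implication actually points the other way: if $I:f=\mathfrak{p}$ with $\mathfrak{p}$ linear of height $n-1$, then $\mathfrak{p}+(\ell)=\mathfrak{m}$ and $(I+(\ell)):f\supseteq\mathfrak{m}$, so (provided $f\notin I+(\ell)$) one gets $\vnum(I+(\ell))\leq\vnum(I)$, which is useless for your purpose. Your sketch for the needed direction---lifting a minimal-degree socle element $g$ of $\bar A$ to a witness $f$ with $I:f=\mathfrak{p}$ and $\deg f\leq\deg g$---is only heuristic: the relations $x_ig=\ell u_i$ in $S/I$ do define an element of the saturation, but you have not shown how to extract from it a homogeneous $f\in S$ of degree at most $\deg g$ with $I:f$ equal to a \emph{single} associated prime, and the primary-decomposition bookkeeping you allude to (``controlling the interaction between $Q$ and $I'$'') is exactly the missing content. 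The $\Ext^1$ reformulation you mention identifies $\socle(\bar A)$ with a submodule of $\Ext^1_S(K,S/I)(-1)$, but that module is not the one computing $\vnum(I)$ (which is governed by the modules $(I:\mathfrak{p})/I=\Hom_S(S/\mathfrak{p},S/I)$), so the homological alternative does not close the gap either. Your use of Theorem~\ref{thm:level_vnum} for the level-equality clause is legitimate and independent of the cited result, but it only supplies the direction $\vnum(I)\geq\reg(S/I)$; the reverse inequality still rests on the unproved step.
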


The following examples show that the  $\vnum$-number can be strictly greater than the regularity for a level algebra. Indeed, the difference between the $\vnum$-number and the regularity of a level algebra can be arbitrarily large.
\begin{example}
	\label{exmp:level_vi_geq}
	{\rm Take the graph $G$ from \cite[Example 5.4]{vedge}. Let $I$ be the edge ideal of $G$. Then
	$S=K[x_1,\ldots,x_{11}]$ and
	\begin{align*}
		I=&(x_1 x_3 , x_1 x_4 , x_1 x_7 , x_1 x_{10}, x_1 x_{11} , x_2 x_4 , x_2 x_5 , x_2 x_8 , x_2 x_{10} ,
		x_2 x_{11} , x_3 x_5 , x_3 x_6 , x_3 x_8 ,\\& x_3 x_{11} , x_4 x_6 , x_4 x_9 ,
		x_4 x_{11} , x_5 x_7 ,
		x_5 x_9 , x_5 x_{11} , x_6 x_8 , x_6 x_9 , x_7 x_9 , x_7 x_{10} , x_8 x_{10}).
	\end{align*}
	The computation conducted using Macaulay2 \cite{M2} demonstrates:
	\begin{enumerate}
		\item When $K=\mathbb{Q}$ then $S/I$ is Cohen-Macaulay, $\vnum(I)=3$ and $\reg S/I=2$. The Betti numbers  $$\beta_{c,j} =\begin{cases}
			11 & \text{ if } j=c+\reg S/I\\
			0	& \text{ else,}
		\end{cases}$$
		where $c$ is the projective dimension (here $c=8$). That is, the last free module $F_c$ in the free resolution of $S/I$, has the same degree. Consequently, $S/I$ is level.
		
		\item When $K=\mathbb{F}_2$ (finite field of cardinality two), then $\vnum(I)=3$ and $\reg S/I=3$, but $S/I$ is not even Cohen-Macaulay.
	\end{enumerate}}
\end{example}
\begin{example}{\rm
		Consider the graph $H=G_1\sqcup \ldots \sqcup G_k$ with each $G_i$ isomorphic to the graph $G$ mentioned in Example \ref{exmp:level_vi_geq} for all $1\leq i \leq k$. Then by \cite[Proposition 3.9]{ki_vmon}, $\vnum(I(H))=3k$. Again, we have $\reg \frac{\mathbb{Q}[V(H)]}{I(H)}=2k$. Since $\frac{\mathbb{Q}[V(G)]}{I(G)}$ is level, so is $\frac{\mathbb{Q}[V(H)]}{I(H)}$. Thus, the difference between the $\vnum$-number and regularity can be arbitrarily large for level algebras.
}
\end{example}

\begin{example}\label{exmp:cm:vi}
	{\rm Let $G$ be a simple graph with $E(G)=\{\{x_1,x_i\}\mid 1<i\leq n\}$ and $n\geq 3$. Let $W_G$ be the whisker graph on $G$, i.e., $V(W_G)=V(G)\cup\{y_1,\ldots,y_n\}$ and $E(W_G)=E(G)\cup\{\{x_i,y_i\}\mid 1\leq i\leq n\}$. Then we observe that $I(W_G):x_1=(x_2,\ldots,x_n,y_1)$. Hence $\vnum(I(W_G))=1$. The ring $\frac{K[V(W_G)]}{I(W_G)}$ is not level and $\reg \frac{K[V(W_G)]}{I(W_G)}=n-1$ (see \cite[Proposition 2.10]{nguyen2023weak}). Also, it is well-known that whisker graphs are Cohen-Macaulay. Thus, the regularity can be arbitrarily larger than the $\vnum$-number for Cohen-Macaulay edge ideals.}
\end{example}

\section{The $\vnum$-number of Frobenius powers}\label{sec4}

In this section, $S=K[x_1,\ldots,x_n]$ be a standard graded polynomial ring over a field $K$ of prime characteristic $p$, and in this context, $q$ is always a power of $p$. That is $q=p^e$ for some non-negative integer $e$. Also, assume that $I\subset S$ be a graded ideal. Define the $q$-th Frobenius power of $I$ as $I^{[q]}:=\left( a^q : a\in I\right)$.

The primary objective of this section is to comprehend the asymptotic behaviour of $\vnum(I^{[q]})$. In order to achieve our goal, we introduce an invariant as follows. For each $q> 1$, we define 
\begin{align*}
	\alpha_q (I):=\min\left\{d \mid \left[\frac{I^{[q]}:I}{I^{[q]}} \right]_d\neq 0 \right\}.
\end{align*}
Observe that $\alpha_{q}(I)$ is same as $\alpha((I^{[q]}:I)/I^{[q]})$. The subsequent portion of this section will delve into the asymptotic behaviour of $\alpha_{q}(I)$ and its connection with $\vnum(I^{[q]})$.

Before we start, we state some important results related to Frobenius powers.
\begin{lemma}
	Assume the above notation. For any ideal $I,J\subset S$ and for all $q\geq 1$, we have
	\begin{enumerate}
		\item $(I\cap J)^{[q]}=I^{[q]}\cap J^{[q]}$ and $(I:J)^{[q]}=I^{[q]}:J^{[q]}$.
		\item \cite[Lemma 2.2]{hh02comparison_of_symbolic} $\Ass(I^{[q]})=\Ass (I)$.
	\end{enumerate}
\end{lemma}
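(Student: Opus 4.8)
Part (2) is quoted verbatim from \cite[Lemma 2.2]{hh02comparison_of_symbolic}, so what needs proof is part (1). The plan is to use the standard description of $S$ as a free module over its Frobenius subring. In characteristic $p$, the freshman's dream gives $(\sum c_{\lambda}m_{\lambda})^{q}=\sum c_{\lambda}^{q}m_{\lambda}^{q}$, so the $e$-th Frobenius map $\varphi\colon S\to S$, $\varphi(f)=f^{q}$, is an injective ring homomorphism with image $S^{q}:=K^{q}[x_{1}^{q},\ldots,x_{n}^{q}]$, and for $I=(f_{1},\ldots,f_{r})$ one has $I^{[q]}=(f_{1}^{q},\ldots,f_{r}^{q})$. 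Moreover $S$ is a free module over $S^{q}$: identifying $S^{q}$ with $S$ via $\varphi$, a basis is $\{x^{a}b\mid 0\le a_{i}<q,\ b\in\mathcal{B}\}$, where $x^{a}=x_{1}^{a_{1}}\cdots x_{n}^{a_{n}}$ and $\mathcal{B}$ is a $K^{q}$-basis of the $K^{q}$-vector space $K$ (one may take $\mathcal{B}=\{1\}$ if $K$ is perfect, e.g.\ $K=\mathbb{F}_{p}$). Denote these basis elements $e_{1},\ldots,e_{N}$; then every $f\in S$ has a unique expansion $f=\sum_{\lambda=1}^{N}g_{\lambda}^{q}e_{\lambda}$ with $g_{\lambda}\in S$, the $g_{\lambda}$ being unique because $S$ is a domain of characteristic $p$ (if $a^{q}=b^{q}$ then $(a-b)^{q}=0$, so $a=b$).

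The crux is the following claim: \emph{with $f=\sum_{\lambda}g_{\lambda}^{q}e_{\lambda}$ as above, $f\in I^{[q]}$ if and only if $g_{\lambda}\in I$ for every $\lambda$.} The ``if'' part is clear since $g_{\lambda}^{q}\in I^{[q]}$. For ``only if'', write $f=\sum_{i}a_{i}f_{i}^{q}$ with $a_{i}\in S$, expand $a_{i}=\sum_{\lambda}a_{i\lambda}^{q}e_{\lambda}$, and apply the freshman's dream to get $f=\sum_{\lambda}\bigl(\sum_{i}a_{i\lambda}f_{i}\bigr)^{q}e_{\lambda}$; uniqueness of the expansion forces $g_{\lambda}=\sum_{i}a_{i\lambda}f_{i}\in I$. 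I expect this claim to be the only real obstacle: it is a hands-on incarnation of the flatness of the Frobenius map (Kunz), and establishing the free-basis statement cleanly (base-$q$ expansion of exponent vectors, plus a $K^{q}$-basis of $K$ when $K$ is imperfect) together with the uniqueness of $q$-th roots is the technical heart. A reader preferring to black-box this may instead simply invoke that $\varphi$ is flat and use the classical facts $(I\cap J)T=IT\cap JT$ and $(I:_{R}J)T=IT:_{T}JT$ (the latter for $J$ finitely generated) valid for any flat ring map $R\to T$.

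Granting the claim, both identities in (1) are immediate. Fix finite generating sets $I=(f_{1},\ldots,f_{r})$ and $J=(g_{1},\ldots,g_{s})$, and write a general element as $f=\sum_{\lambda}g_{\lambda}^{q}e_{\lambda}$. For the intersection: $f\in I^{[q]}\cap J^{[q]}$ iff every $g_{\lambda}$ lies in both $I$ and $J$, i.e.\ iff every $g_{\lambda}\in I\cap J$, i.e.\ iff $f\in(I\cap J)^{[q]}$. For the colon: since $J^{[q]}=(g_{1}^{q},\ldots,g_{s}^{q})$, we have $f\in I^{[q]}:J^{[q]}$ iff $fg_{j}^{q}\in I^{[q]}$ for all $j$; but $fg_{j}^{q}=\sum_{\lambda}(g_{\lambda}g_{j})^{q}e_{\lambda}$, so by the claim this holds iff $g_{\lambda}g_{j}\in I$ for all $\lambda,j$, i.e.\ iff $g_{\lambda}\in I:J$ for all $\lambda$, i.e.\ iff $f\in(I:J)^{[q]}$. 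Finally, although part (2) can also be deduced from part (1) together with the behaviour of associated primes under flat base change, I would keep the citation to \cite{hh02comparison_of_symbolic} since it is already in the literature.
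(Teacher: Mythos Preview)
The paper does not actually prove this lemma: it is stated as a preliminary fact, with part~(2) cited from \cite{hh02comparison_of_symbolic} and part~(1) left without proof or reference (these identities are standard folklore for Frobenius powers over regular rings). Your argument is correct and fills this gap. You make explicit the freeness of $S$ over the image $S^{q}$ of the Frobenius endomorphism---the concrete polynomial-ring incarnation of Kunz's theorem---and your key claim (that $f=\sum_{\lambda}g_{\lambda}^{q}e_{\lambda}\in I^{[q]}$ iff every $g_{\lambda}\in I$) is exactly the ingredient needed; the deductions of the intersection and colon identities from it are clean. Your aside that one may instead black-box flatness of the Frobenius and invoke the general facts $(I\cap J)T=IT\cap JT$ and $(I:_{R}J)T=IT:_{T}JT$ for flat $R\to T$ with $J$ finitely generated is indeed the usual one-line argument in the literature. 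Either route suffices; your explicit version has the merit of being self-contained and of handling imperfect base fields correctly via the $K^{q}$-basis $\mathcal{B}$ of $K$.
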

One noteworthy observation, proved below, is that the $\alpha_{q}(I)$ is bounded above by a linear function.
\begin{lemma}
	\label{lemma:alpha_q_bounded}
	Let $I\subset S$ be a graded ideal minimally generated by homogeneous elements $g_1,g_2,\ldots,g_m$. Then for all $q>1$, $\alpha_{q}(I)\leq (q-1)\sum_{i=1}^{m}\deg g_i$.
\end{lemma}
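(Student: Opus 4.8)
The goal is to exhibit one homogeneous element of $I^{[q]}:I$ that is not in $I^{[q]}$ and has degree at most $(q-1)\sum_{i=1}^m\deg g_i$; by the definition of $\alpha_q$ this is exactly the claimed bound. The obvious candidate is $f:=\prod_{i=1}^m g_i^{q-1}$. That $f\in I^{[q]}:I$ is immediate, because for each minimal generator $g_j$ of $I$ one has $fg_j=g_j^{\,q}\prod_{i\ne j}g_i^{q-1}\in(g_j^{\,q})\subseteq I^{[q]}$; moreover $\deg f=(q-1)\sum_{i=1}^m\deg g_i$. So if $f\notin I^{[q]}$ the proof is finished immediately. (The same works with $f=\operatorname{lcm}(g_1,\dots,g_m)^{q-1}$, which divides $\prod_i g_i^{q-1}$ in the UFD $S$ and still lies in $I^{[q]}:I=\bigcap_j(I^{[q]}:g_j)$, being a multiple of each $g_j^{\,q-1}$.)

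The substantive case is when $f\in I^{[q]}$; this genuinely occurs, e.g. for $I=\mathfrak m^{2}$ in $K[x,y]$, where $x^{3}y^{3}=(xy)\cdot(xy)^{2}\in(x^4,x^2y^2,y^4)$. Here I would descend within the finite family of products $f_{\mathbf e}:=\prod_{i=1}^m g_i^{e_i}$ with $0\le e_i\le q-1$: each such $f_{\mathbf e}$ has degree $\sum_i e_i\deg g_i\le(q-1)\sum_i\deg g_i$, so it is enough to find one $\mathbf e$ with $f_{\mathbf e}\in I^{[q]}:I$ but $f_{\mathbf e}\notin I^{[q]}$. We know $f_{(q-1,\dots,q-1)}=f\in I^{[q]}:I$, whereas $f_{(0,\dots,0)}=1\notin I^{[q]}:I$: if it were, then $I\subseteq I^{[q]}\subseteq\mathfrak m I$, which forces $I=0$ by Nakayama (each $g_i\in\mathfrak m$ since $I\ne S$). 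Starting at the top corner, whenever the current $f_{\mathbf e}$ lies in $(I^{[q]}:I)\cap I^{[q]}$ I would pass to $f_{\mathbf e}/g_j=f_{\mathbf{e}-\mathbf{1}_{j}}$ for a suitable $j$ with $e_j\ge1$ — a product of strictly smaller degree — noting that $(f_{\mathbf e}/g_j)\,g_j=f_{\mathbf e}\in I^{[q]}$ is automatic, so only the memberships $(f_{\mathbf e}/g_j)\,g_k\in I^{[q]}$ for $k\ne j$ need checking. Since the descent only visits members lying in $I^{[q]}:I$ and $\mathbf 0$ is not such a member, it can never reach the corner $\mathbf 0$, so (granting that a valid step always exists while $f_{\mathbf e}\in I^{[q]}$) it must halt at some $\mathbf e$ with $f_{\mathbf e}\in(I^{[q]}:I)\setminus I^{[q]}$, giving $\alpha_q(I)\le\deg f_{\mathbf e}\le(q-1)\sum_{i=1}^m\deg g_i$.

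The main obstacle is precisely the choice of the index $j$ at each descent step: dividing $f_{\mathbf e}$ by an arbitrary generator need not preserve membership in $I^{[q]}:I$, so one must show a workable $j$ always exists — equivalently, that $\Sigma=\{\mathbf e:f_{\mathbf e}\in I^{[q]}:I\}$ is closed under descent in the sense that any $\mathbf e\in\Sigma$ with $f_{\mathbf e}\in I^{[q]}$ has some neighbour $\mathbf{e}-\mathbf{1}_{j}\in\Sigma$. Should this turn out to be awkward, the fallback is induction on the number $m$ of generators: write $I=J+(g_m)$ with $J=(g_1,\dots,g_{m-1})$ (still a minimal generating set of $J$); the inductive hypothesis yields $h\in(J^{[q]}:J)\setminus J^{[q]}$ with $\deg h\le(q-1)\sum_{i<m}\deg g_i$, and then $hg_m^{q-1}$ always lies in $I^{[q]}:I$ (because $hg_i\in J^{[q]}\subseteq I^{[q]}$ for $i<m$, while $hg_m^{q-1}\cdot g_m\in(g_m^{\,q})$) and has degree $\le(q-1)\sum_{i\le m}\deg g_i$; it then remains only to treat the subcase $hg_m^{q-1}\in I^{[q]}$, which reduces to analysing the colon ideal $J^{[q]}:g_m^{\,q-1}$ and extracting from the relation $g_m^{q-1}(h-cg_m)\in J^{[q]}$ a lower-degree element outside $I^{[q]}$.
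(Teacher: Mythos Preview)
Your candidate $f=\prod_i g_i^{q-1}$ and the observation $f\in I^{[q]}:I$ match the paper's opening move. The difficulty, as you correctly isolate, is the descent when $f\in I^{[q]}$, and here your argument is genuinely incomplete: the single-path descent you describe need not work. To pass from $f_{\mathbf e}$ to $f_{\mathbf e-\mathbf 1_j}$ and remain in $I^{[q]}:I$ you need $f_{\mathbf e-\mathbf 1_j}\,g_k\in I^{[q]}$ for every $k$; when $k\ne j$ and $e_k<q-1$ this says $f_{\mathbf e-\mathbf 1_j+\mathbf 1_k}\in I^{[q]}$, i.e.\ a \emph{side-neighbour} of $f_{\mathbf e}$ on the same level $|\mathbf e|$ must lie in $I^{[q]}$. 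Nothing recorded along your single path controls these side-neighbours, so there is no reason a valid $j$ exists at each step. You flag this as ``the main obstacle'' but never discharge it.

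The paper's fix is precisely to track the whole level rather than one element of it. Set $\chi(l)=\{f_{\mathbf e}:|\mathbf e|=l,\ 0\le e_i\le q-1\}$ and prove: if $\chi(l)\subseteq I^{[q]}$ then $\chi(l-1)\subseteq I^{[q]}:I$. Indeed, for $h\in\chi(l-1)$ and any $i$, either the $g_i$-exponent in $h$ is already $q-1$ (so $hg_i\in(g_i^q)$) or $hg_i\in\chi(l)\subseteq I^{[q]}$. One then descends level-by-level from $\chi(m(q-1))=\{f\}\subseteq I^{[q]}$ until reaching the first $l'$ with $\chi(l'-1)\not\subseteq I^{[q]}$; such an $l'$ exists because $\chi(1)=\{g_1,\dots,g_m\}\not\subseteq I^{[q]}$ (else $I\subseteq\mathfrak m I$), and any element of $\chi(l'-1)\setminus I^{[q]}$ witnesses the bound. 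The point is that the hypothesis ``the \emph{entire} level lies in $I^{[q]}$'' is exactly what your side-neighbour problem demands.

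Your inductive fallback on $m$ is likewise only a sketch: the residual case $hg_m^{q-1}\in I^{[q]}$ is where all the work lies, and ``extracting from $g_m^{q-1}(h-cg_m)\in J^{[q]}$ a lower-degree element outside $I^{[q]}$'' is a hope, not an argument. (Minor aside: the $\operatorname{lcm}$ variant you mention only makes sense when the $g_i$ are monomials; for general homogeneous generators there is no such element.)
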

\begin{proof}
	Let $g=\prod_{i=1}^{m}g_i^{q-1}$. Note that we need to show that $\alpha_{q}(I)$ is bounded above by  $\deg g$. As  $gg_i \in g_i^qS$ for all $1\leq i \leq m$, so $gI\subseteq I^{[q]}$. If $g\not\in I^{[q]}$, then we are done. So assume $g\in I^{[q]}$.
	
	For each $1\leq l\leq m(q-1)$, consider the set of homogeneous elements $$\chi(l):=\{ \prod_{i=1}^{m}g_i^{a_i} : 0\leq  a_i \leq q-1 \text{ and } \sum_{i=1}^{m}a_i=l \}.$$
	For a fixed $l\geq 2$, we claim that,
\begin{align}
	\label{claim:induction_on_l}
	\text{ if }
		\chi(l) \subseteq I^{[q]} \text{ then } \chi(l-1)\subseteq I^{[q]}:I. 
\end{align}
Assume the claim. Since $g\in I^{[q]}$, this mean $\chi(m(q-1))\subseteq I^{[q]}$. Therefore $\chi (m(q-1)-1)\subseteq I^{[q]}:I$. If $\chi (m(q-1)-1)\not\subseteq I^{[q]}$, then there exist an $h\in \chi (m(q-1)-1)$ such that $h\in (I^{[q]}:I)\setminus I^{[q]}$ and hence $\alpha_q(I) \leq \deg h \leq \deg g$. Else $\chi(m(q-1)-1)\subseteq I^{[q]}$. We repeat the argument until we get an $l'\geq 2$, such that $\chi (l'-1)\subseteq I^{[q]}:I$ and $\chi (l'-1)\not\subseteq I^{[q]}$. The inductive process must stop and such an $l'$ always exists because $\chi(1) \not\subseteq I^{[q]}$. Hence there exists an $h\in \chi(l'-1)$ such that $h\in (I^{[q]}:I)\setminus I^{[q]}$. Therefore $\alpha_{q}(I) \leq \deg h \leq  \deg g=(q-1)\sum_{i=1}^{m}\deg g_i$.

	It remains to prove the claim~\ref{claim:induction_on_l}. Assume that for some $l\geq 2$, $\chi(l)\subseteq I^{[q]}$. Let $h\in \chi(l-1)$. If $g_i^{q-1}$ is a factor of $h$, then $hg_i\in g_i^{q}S\subseteq I^{[q]}$. Else $hg_i\in \chi(l)\subseteq I^{[q]}$. Therefore $hg_i\in I^{[q]}$ for all $1\leq i\leq m$ that is $h \in I^{[q]}:I$. Therefore $\chi(l-1)\subseteq I^{[q]}$. Hence, the claim follows.
\end{proof}

\begin{lemma}
	\label{lemma:alphaq_equal_vnum}
	Let $\frakp\subset S$ be a graded prime ideal. Then for all $q>1$, $\vnum(\frakp^{[q]})=\alpha_{q}(\frakp)$.
\end{lemma}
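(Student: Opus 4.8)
The plan is to observe that both $\vnum(\mathfrak{p}^{[q]})$ and $\alpha_q(\mathfrak{p})$ are governed by the same object, namely the homogeneous elements $f$ satisfying $f\mathfrak{p}\subseteq\mathfrak{p}^{[q]}$ and $f\notin\mathfrak{p}^{[q]}$, and to show that any such $f$ automatically satisfies $\mathfrak{p}^{[q]}:f=\mathfrak{p}$, via Remark \ref{remark:v_Iless_deg_f}. First I would record the bookkeeping. Since $\mathfrak{p}$ is prime, $\Ass(\mathfrak{p}^{[q]})=\Ass(\mathfrak{p})=\{\mathfrak{p}\}$ by \cite[Lemma 2.2]{hh02comparison_of_symbolic}, so $\mathfrak{p}^{[q]}$ is $\mathfrak{p}$-primary (in particular $\sqrt{\mathfrak{p}^{[q]}}=\mathfrak{p}$) and $\vnum(\mathfrak{p}^{[q]})=\vnum_{\mathfrak{p}}(\mathfrak{p}^{[q]})$. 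The case $\mathfrak{p}=(0)$ is immediate (both invariants are $0$), so assume $\mathfrak{p}\neq(0)$; then $\alpha(\mathfrak{p})\geq 1$, and from $\mathfrak{p}^{[q]}\subseteq\mathfrak{p}^{q}$ we get $\alpha(\mathfrak{p}^{[q]})\geq q\,\alpha(\mathfrak{p})>\alpha(\mathfrak{p})$ for $q>1$, so $\mathfrak{p}^{[q]}\subsetneq\mathfrak{p}=\sqrt{\mathfrak{p}^{[q]}}$ and hence $\mathfrak{p}^{[q]}$ is not a prime ideal. This last fact is exactly the hypothesis needed to apply Remark \ref{remark:v_Iless_deg_f}.

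For the inequality $\vnum(\mathfrak{p}^{[q]})\leq\alpha_q(\mathfrak{p})$, I would choose a homogeneous $f$ of degree $\alpha_q(\mathfrak{p})$ representing a nonzero class in $(\mathfrak{p}^{[q]}:\mathfrak{p})/\mathfrak{p}^{[q]}$, so that $f\mathfrak{p}\subseteq\mathfrak{p}^{[q]}$ and $f\notin\mathfrak{p}^{[q]}$. As $\mathfrak{p}^{[q]}$ is non-prime with unique associated prime $\mathfrak{p}$ (which is therefore contained in no other associated prime), Remark \ref{remark:v_Iless_deg_f} gives $\mathfrak{p}^{[q]}:f=\mathfrak{p}$, hence $\vnum_{\mathfrak{p}}(\mathfrak{p}^{[q]})\leq\deg f=\alpha_q(\mathfrak{p})$. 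For the reverse inequality $\alpha_q(\mathfrak{p})\leq\vnum(\mathfrak{p}^{[q]})$, I would pick a homogeneous $g$ of degree $\vnum(\mathfrak{p}^{[q]})=\vnum_{\mathfrak{p}}(\mathfrak{p}^{[q]})$ with $\mathfrak{p}^{[q]}:g=\mathfrak{p}$; then $g\mathfrak{p}\subseteq\mathfrak{p}^{[q]}$ and $g\notin\mathfrak{p}^{[q]}$ (otherwise $\mathfrak{p}^{[q]}:g=S\neq\mathfrak{p}$), so the class of $g$ in $(\mathfrak{p}^{[q]}:\mathfrak{p})/\mathfrak{p}^{[q]}$ is nonzero of degree $\vnum(\mathfrak{p}^{[q]})$, forcing $\alpha_q(\mathfrak{p})\leq\vnum(\mathfrak{p}^{[q]})$. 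Combining the two gives $\vnum(\mathfrak{p}^{[q]})=\alpha_q(\mathfrak{p})$.

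I do not expect a genuine obstacle here: once Remark \ref{remark:v_Iless_deg_f} is in hand, the statement is essentially a translation between the two definitions. The only mildly delicate point is checking that $\mathfrak{p}^{[q]}$ is non-prime — this is needed both to invoke the Remark and to ensure $(\mathfrak{p}^{[q]}:\mathfrak{p})/\mathfrak{p}^{[q]}\neq 0$ so that $\alpha_q(\mathfrak{p})$ is finite — and this is precisely where the hypotheses $q>1$ and $\mathfrak{p}\neq(0)$ enter, through the degree estimate $\alpha(\mathfrak{p}^{[q]})\geq q\,\alpha(\mathfrak{p})$.
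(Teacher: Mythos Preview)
Your proof is correct and follows essentially the same line as the paper's: both rest on the single observation that $\Ass(\frakp^{[q]})=\{\frakp\}$, after which $\vnum(\frakp^{[q]})=\alpha\bigl((\frakp^{[q]}:\frakp)/\frakp^{[q]}\bigr)=\alpha_q(\frakp)$. The only difference is packaging: the paper dispatches this identification in one line by citing \cite[Proposition~4.2]{cstpv20}, whereas you reprove that proposition from scratch via Remark~\ref{remark:v_Iless_deg_f} (and take some extra care with the edge cases $\frakp=(0)$ and the non-primeness of $\frakp^{[q]}$, which the paper leaves implicit).
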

\begin{proof}
	Since $\frakp$ is the only associated prime of $\frakp^{[q]}$, $\vnum(\frakp^{[q]})=\alpha \left( \frac{\frakp^{[q]}:\frakp}{\frakp^{[q]}} \right)$ \cite[Proposition 4.2]{cstpv20}, which is same as $\alpha_{q}(\frakp)$.	
\end{proof}
\begin{lemma}
	\label{lem:vi_ci}
	Let $I\subset S$ be a graded ideal minimally generated by linear forms. Then for all $q>1$, $$\vnum(I^{[q]})=\alpha_q(I) =(q-1)\height(I).$$
\end{lemma}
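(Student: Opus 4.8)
The plan is to split the asserted chain $\vnum(I^{[q]}) = \alpha_q(I) = (q-1)\height(I)$ into three pieces, two of which are essentially free. By Remark~\ref{rmk:ci}, an ideal minimally generated by $m$ linear forms is a complete intersection \emph{prime} ideal of height $m$, so $\height(I)=m$; and since $I$ is prime, Lemma~\ref{lemma:alphaq_equal_vnum} applies directly and yields $\vnum(I^{[q]})=\alpha_q(I)$ for all $q>1$. For the upper bound, Lemma~\ref{lemma:alpha_q_bounded} gives $\alpha_q(I)\le (q-1)\sum_{i=1}^{m}\deg g_i=(q-1)m$ because the minimal generators $g_1,\dots,g_m$ of $I$ are linear. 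So the whole statement comes down to the single inequality $\alpha_q(I)\ge (q-1)m$, and this is the only real step.

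For that inequality I would reduce to a monomial computation by a linear change of variables. Let $\phi\colon S\to S$ be the graded automorphism from the proof of Remark~\ref{rmk:ci}, normalised so that $\phi(I)=\mathfrak{n}$ with $\mathfrak{n}:=(x_1,\dots,x_m)$. Being a ring automorphism, $\phi$ commutes with the formation of Frobenius powers and of colon ideals, so $\phi(I^{[q]})=\mathfrak{n}^{[q]}=(x_1^q,\dots,x_m^q)$ and $\phi$ induces a graded isomorphism $(I^{[q]}:I)/I^{[q]}\cong(\mathfrak{n}^{[q]}:\mathfrak{n})/\mathfrak{n}^{[q]}$; hence $\alpha_q(I)=\alpha_q(\mathfrak{n})$. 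Since both ideals are now monomial, a nonzero element in degree $d$ of the latter quotient is witnessed by a monomial $x^a$ of degree $d$ with $x^a\notin(x_1^q,\dots,x_m^q)$ but $x_i x^a\in(x_1^q,\dots,x_m^q)$ for $1\le i\le m$. A short exponent count shows the first condition means $a_j\le q-1$ for $j\le m$, and then $x_i x^a$ can enter $(x_1^q,\dots,x_m^q)$ only through the variable $x_i$ itself, forcing $a_i=q-1$ — and this must hold for every $i\le m$. Thus $x_1^{q-1}\cdots x_m^{q-1}$ divides $x^a$ and $d\ge (q-1)m$; conversely $x_1^{q-1}\cdots x_m^{q-1}$ itself lies in $(\mathfrak{n}^{[q]}:\mathfrak{n})\setminus\mathfrak{n}^{[q]}$, so $\alpha_q(\mathfrak{n})=(q-1)m$, finishing the proof. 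The crux — and the only spot where an argument could actually slip — is recognising that $(I^{[q]}:I)/I^{[q]}$ is concentrated in degrees $\ge (q-1)m$; once one is in coordinates this is a finite, transparent check.

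A second route, more in line with Section~\ref{sec3}, is linkage. The inclusion $I^{[q]}\subsetneq I$ is an inclusion of complete intersections of the same projective dimension $m$ with $S/I^{[q]}$ and $S/I$ Gorenstein, so Proposition~\ref{prop:gor_linkage} produces a single homogeneous $f$ with $I^{[q]}:I=(I^{[q]},f)$ and $\deg f=\reg(S/I^{[q]})=(q-1)m$ (using $\reg(S/I)=0$ and the fact that a complete intersection of $m$ forms of degree $q$ has regularity $(q-1)m$). Moreover $I^{[q]}\subsetneq I^{[q]}:I$: otherwise $I^{[q]}:(I^{[q]}:I)=S\neq I$, contradicting Discussion~\ref{disc1}. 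Hence the class of $f$ is a nonzero generator of the cyclic module $(I^{[q]}:I)/I^{[q]}$, lying in degree $(q-1)m$, so again $\alpha_q(I)=(q-1)m$. I would present the change-of-variables argument as the main proof, since it needs no complete-intersection hypotheses to be checked, and record the linkage computation only as a remark.
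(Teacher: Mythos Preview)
Your proof is correct. Both routes work, and neither coincides with the paper's argument, which is shorter still: the authors stay in the original coordinates, note that $l_1,\dots,l_m$ and $l_1^q,\dots,l_m^q$ are regular sequences, and invoke \cite[Corollary~2.3.10]{bruns_herzog_1998} (the classical determinant formula for the colon of one complete intersection inside another) to get $I^{[q]}:I = I^{[q]} + (l_1\cdots l_m)^{q-1}S$ in one stroke; this immediately gives $\alpha_q(I)=(q-1)m$, and then Lemma~\ref{lemma:alphaq_equal_vnum} finishes as in your write-up.

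Compared with that, your main argument trades the external citation for a change of variables plus an explicit exponent count, which is more self-contained and makes the structure of $(I^{[q]}:I)/I^{[q]}$ transparent, at the price of a few extra lines. Your second route via Proposition~\ref{prop:gor_linkage} is really a repackaging of the same linkage principle the paper is using: the Bruns--Herzog corollary \emph{is} the complete-intersection linkage formula, so you are recovering $I^{[q]}:I=(I^{[q]},f)$ with $\deg f=(q-1)m$ by a longer path through the Gorenstein machinery of Section~\ref{sec3}. If you want to match the paper's economy, simply cite the colon formula for regular sequences directly; otherwise your monomial reduction is a perfectly good, and arguably more elementary, substitute.
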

\begin{proof}
	Keep in mind that $I$ is both a complete intersection and a prime ideal. Assume that $I$ is minimally generated by linear forms $l_1,\ldots,l_m$. Then $I^{[q]}=(l_1^q,\ldots,l_m^q)$. The sequences $l_1,\ldots,l_m$ and $l_1^q,\ldots,l_m^q$ both are regular sequences. Using \cite[Corollary 2.3.10]{bruns_herzog_1998}, $I^{[q]}:I=I^{[q]}+(l_1\cdots l_k)^{q-1}S$ and consequently $I^{[q]}:(l_1\cdots l_m)^{q-1}=I$. Hence $\alpha_{q}(I)=\deg (l_1\cdots l_m)^{q-1}=(q-1)\height(I)$. Now, use Lemma \ref{lemma:alphaq_equal_vnum} to get the desired result.
\end{proof}
Now, we are ready to show that the $\vnum$-number of Frobenius power is bounded by a linear function.
\begin{proposition}\label{propviqupper}
	Let $I\subset S$ be a graded ideal and $\frakp \in \Ass I$. Then for all $q>1$,
	$$\vnum(I^{[q]})\leq q\vnum_{\frakp}(I)+(q-1) \sum_{i=1}^{m}\deg g_i,$$
	where $\frakp$ is minimally generated by $g_1,\ldots,g_m$.
	
	In particular, if all the associated primes of $I$ are generated by linear forms, then
	$$\vnum(I^{[q]})\leq q\vnum(I)+(q-1)\bight(I).$$
\end{proposition}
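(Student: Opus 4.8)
The plan is to reduce the bound on the global invariant $\vnum(I^{[q]})$ to a bound on the local invariant $\vnum_{\frakp}(I^{[q]})$, and then to exhibit an explicit homogeneous witness for this local $\vnum$-number as a product of a witness for $\vnum_{\frakp}(I)$ and a witness for $\vnum(\frakp^{[q]})$. Since $\Ass(I^{[q]})=\Ass(I)$, we have $\frakp\in\Ass(I^{[q]})$, so $\vnum(I^{[q]})\le\vnum_{\frakp}(I^{[q]})$ and it suffices to bound the latter. First I would fix a homogeneous $f\in S$ with $I:f=\frakp$ and $\deg f=\vnum_{\frakp}(I)$, and a homogeneous $w\in S$ with $\frakp^{[q]}:w=\frakp$ and $\deg w=\vnum_{\frakp}(\frakp^{[q]})$; such a $w$ exists because $\frakp$ is the unique associated prime of $\frakp^{[q]}$.

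Next I would put $h:=f^q w$ and compute $I^{[q]}:h$. From the Frobenius colon identity $(I:(f))^{[q]}=I^{[q]}:(f)^{[q]}$, the elementary equality $(f)^{[q]}=(f^q)$, and the hypothesis $I:f=\frakp$, one gets $I^{[q]}:f^q=\frakp^{[q]}$, and iterating colons gives $I^{[q]}:h=(I^{[q]}:f^q):w=\frakp^{[q]}:w=\frakp$. Hence $\vnum_{\frakp}(I^{[q]})\le\deg h=q\vnum_{\frakp}(I)+\vnum_{\frakp}(\frakp^{[q]})$. To bound the last term I would invoke Lemma \ref{lemma:alphaq_equal_vnum}, which gives $\vnum_{\frakp}(\frakp^{[q]})=\vnum(\frakp^{[q]})=\alpha_q(\frakp)$, and then Lemma \ref{lemma:alpha_q_bounded} applied to the ideal $\frakp$, which gives $\alpha_q(\frakp)\le(q-1)\sum_{i=1}^m\deg g_i$; this establishes the first inequality. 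For the ``in particular'' statement, I would pick $\frakp\in\Ass(I)$ realizing $\vnum_{\frakp}(I)=\vnum(I)$; by Remark \ref{rmk:ci} a linearly generated $\frakp$ is minimally generated by exactly $\height(\frakp)$ linear forms, so $\sum_{i=1}^m\deg g_i=\height(\frakp)\le\bight(I)$, and substituting gives $\vnum(I^{[q]})\le q\vnum(I)+(q-1)\bight(I)$.

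I expect the one delicate point to be verifying the colon identity $I^{[q]}:h=\frakp$ exactly. It is tempting to argue only that $h\frakp\subseteq I^{[q]}$ and $h\notin I^{[q]}$ and then invoke Remark \ref{remark:v_Iless_deg_f}, but that remark requires $\frakp$ to be maximal among the associated primes of $I$, which need not hold for an arbitrary $\frakp\in\Ass(I)$ (an embedded linear prime is a genuine possibility). Routing the computation through the two Frobenius colon identities and the carefully chosen minimal-degree $w$ sidesteps this and pins the colon down to $\frakp$ precisely. The remaining ingredients — the reduction via $\Ass(I^{[q]})=\Ass(I)$, the degree arithmetic, and the passage from the general bound to the linear-generator bound — are routine.
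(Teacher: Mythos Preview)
Your proof is correct and follows essentially the same route as the paper: pick a witness $f$ for $\vnum_{\frakp}(I)$ and a witness $w$ for $\vnum(\frakp^{[q]})$, compute $I^{[q]}:f^qw=(I:f)^{[q]}:w=\frakp^{[q]}:w=\frakp$ via the Frobenius colon identity, and then bound $\vnum(\frakp^{[q]})$ by $(q-1)\sum\deg g_i$ using Lemmas~\ref{lemma:alphaq_equal_vnum} and~\ref{lemma:alpha_q_bounded}. Your explicit remark that Remark~\ref{remark:v_Iless_deg_f} would not suffice here (since $\frakp$ need not be maximal in $\Ass(I)$) is a useful cautionary note, but the paper likewise avoids that route and computes the colon directly.
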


\begin{proof}
	Let $f\in S$ be a homogeneous polynomial such that $(I:f)=\frakp$ and $\deg f=\vnum_{\frakp}(I)$. Also let $h\in S$ be a homogeneous polynomial such that $(\frakp^{[q]}:h)=\mathfrak{p}$ and $\deg h=\vnum(\frakp^{[q]})$.
	Now
	\begin{align*}
		(I^{[q]}:f^qh) &=(I^{[q]}:f^{q}):h\\
		&=(I:f)^{[q]}:h\\
		&=\mathfrak{p}^{[q]}:h\\
		& =\mathfrak{p}
	\end{align*}
	Therefore, $\vnum(I^{[q]})\leq  q\deg f+ \deg h \leq q\vnum_{\frakp}(I)+(q-1) \sum_{i=1}^{m}\deg g_i$, where the last inequality follows from Lemma~\ref{lemma:alpha_q_bounded} and \ref{lemma:alphaq_equal_vnum}.
	
	If $\frakp\in \Ass I$ is generated by linear form then by above argument $\vnum(I^{[q]})\leq q\vnum_{\frakp}(I)+(q-1)\height(\frakp)$.
	Thus the second assertion follows immediately as $\vnum(I)=\min\left\{\vnum_{\frakp}(I): \frakp\in \Ass I \right\}$.
\end{proof}

\begin{proposition}\label{propviqlower}
	Let $I\subset S$ be a graded monomial ideal. Then for all $q>1$,
	$$\vnum(I^{[q]})\geq q\vnum(I)+(q-1)\height(I).$$
	The equality holds if we further assume $I$ is unmixed.
\end{proposition}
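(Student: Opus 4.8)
The plan is to prove the lower bound first and then handle the equality under the unmixed hypothesis by pairing it with the upper bound from Proposition~\ref{propviqupper}. For a monomial ideal, every associated prime $\frakp$ of $I$ is a monomial prime, hence generated by a subset of the variables, hence generated by linear forms; moreover $\Ass(I^{[q]})=\Ass(I)$. So it suffices to work with a fixed $\frakp\in\Ass(I)$, say $\frakp=(x_{i_1},\dots,x_{i_s})$ where $s=\height(\frakp)$, and show $\vnum_{\frakp}(I^{[q]})\ge q\,\vnum_{\frakp}(I)+(q-1)s$; taking the minimum over $\frakp$ and noting that the minimizing prime on the left is among these will then give the stated bound once we also know $\vnum(I)$ is attained at a minimum-height prime (this last point is where unmixedness enters).

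For the lower bound itself: suppose $h\in S$ is homogeneous with $I^{[q]}:h=\frakp$ and $\deg h=\vnum_{\frakp}(I^{[q]})$. After passing to a monomial in the support of $h$ — since colon ideals of monomial ideals behave well under taking monomials, any monomial $u$ appearing in $h$ still satisfies $\frakp\subseteq I^{[q]}:u$, and at least one such $u$ satisfies $I^{[q]}:u=\frakp$ — we may assume $h=u$ is a monomial. Write $u = u' \cdot \prod x_{i_j}^{a_j}$ where $u'$ involves none of the variables in $\frakp$. The key computation is to localize at $\frakp$ (equivalently, set the variables outside $\frakp$ to be units, or pass to $S_{\frakp}$): there $I^{[q]}S_{\frakp} = (\frakp S_{\frakp})^{[q]} = (x_{i_1}^q,\dots,x_{i_s}^q)$ essentially, and $I^{[q]}:u = \frakp$ forces, for each $j$, that $u\cdot x_{i_j}\in I^{[q]}$ while $u\notin I^{[q]}$. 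I would extract from this, via the complete-intersection colon formula (as in Lemma~\ref{lem:vi_ci}), that $\prod_j x_{i_j}^{q-1}$ divides $u$ up to the $I^{[q]}$-membership condition, contributing degree $(q-1)s$, and that the ``reduced'' part $v := u / \prod_j x_{i_j}^{q-1}$ satisfies $I:v' = \frakp$ for an appropriate $q$-th-root-type modification — more precisely, that $v$ witnesses $I^{[q]}$-membership in a way that its degree is at least $q\,\vnum_{\frakp}(I)$, because any such witness, after dividing exponents by $q$ (rounding), produces an element $f$ with $I:f=\frakp$. Assembling, $\deg u \ge q\,\vnum_{\frakp}(I) + (q-1)s$.

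I expect the main obstacle to be making precise the ``divide the exponents by $q$'' step: given a monomial $u$ with $I^{[q]}:u=\frakp$, one wants to produce a monomial $f$ with $I:f=\frakp$ and $q\deg f \le \deg u - (q-1)\height(\frakp)$, and the exponent arithmetic (each $I^{[q]}$-membership $w^q \mid u$ of a generator $w$ of $I$ must be traded for an $I$-membership $w\mid f$, carefully handling the $\frakp$-variables separately so as not to double-count the $(q-1)s$) needs a clean bookkeeping argument; localizing at $\frakp$ to kill the other variables should make this tractable. Once the lower bound $\vnum(I^{[q]})\ge q\vnum(I)+(q-1)\height(I)$ is in hand, the equality for unmixed $I$ follows because unmixedness gives $\bight(I)=\height(I)$, so Proposition~\ref{propviqupper} yields $\vnum(I^{[q]})\le q\vnum(I)+(q-1)\height(I)$, matching the lower bound.
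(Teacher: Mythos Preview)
Your proposal has a genuine gap at the core step, and the key idea from the paper is missing.

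First, a minor point: you do not need unmixedness for the lower bound. Once you have $\vnum_{\frakp}(I^{[q]})\ge q\,\vnum_{\frakp}(I)+(q-1)\height(\frakp)$ for every $\frakp$, taking minima over $\frakp$ gives $\vnum(I^{[q]})\ge q\vnum(I)+(q-1)\height(I)$ immediately, since $\min_i(a_i+b_i)\ge\min_i a_i+\min_i b_i$. Unmixedness enters only to match this against the upper bound of Proposition~\ref{propviqupper}, exactly as you say at the end.

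The real problem is your lower-bound argument. The localization claim ``$I^{[q]}S_{\frakp}=(\frakp S_{\frakp})^{[q]}$'' is false in general: localizing $I$ at a minimal prime $\frakp$ yields the $\frakp$-primary component of $I$, not $\frakp$ itself, so $I^{[q]}S_{\frakp}$ need not be generated by $x_{i_1}^q,\dots,x_{i_s}^q$. Consequently the complete-intersection colon formula does not apply, and there is no reason the factor $\prod_j x_{i_j}^{q-1}$ should divide $u$. The vague ``divide exponents by $q$'' step you flag as the main obstacle is indeed an obstacle, and your sketch does not overcome it.

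The paper's argument avoids all of this with one clean observation: from $u\frakp\subseteq I^{[q]}$ one gets $u\frakp^{[q]}\subseteq I^{[q]}$, hence
\[
u\in I^{[q]}:\frakp^{[q]}=(I:\frakp)^{[q]}.
\]
Since $u$ is a monomial, this forces a factorization $u=h^{q}r$ with $h\in I:\frakp$ a monomial and $r$ a monomial. Now $h\notin I$ (else $u\in I^{[q]}$), and one checks $I:h=\frakp$ by observing that $\frakp\subseteq I:h$ and that $\frakp\in\Ass(I:h)$ (the latter because $(I^{[q]}:h^{q}):r=\frakp$ and $\Ass(I^{[q]}:h^q)=\Ass(I:h)$). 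Thus $\deg h\ge\vnum_{\frakp}(I)$. Finally $\frakp=(I:h)^{[q]}:r=\frakp^{[q]}:r$, so $\deg r\ge\vnum(\frakp^{[q]})=(q-1)\height(\frakp)$ by Lemma~\ref{lem:vi_ci}. Adding gives $\deg u=q\deg h+\deg r\ge q\vnum_{\frakp}(I)+(q-1)\height(\frakp)$. This replaces your localization and exponent-bookkeeping entirely.
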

\begin{proof}
	Let $f\in S$ be a monomial and $\frakp\in\Ass(I^{[q]})$ such that $I^{[q]}:f=\frakp$ and $\deg f=\vnum(I^{[q]})$.  Since $f\frakp^{[q]}\subseteq f\frakp \subseteq I^{[q]}$, we have $f\in (I^{[q]}:\frakp^{[q]})=(I:\frakp)^{[q]}$.
	Therefore, there exists a monomial $h\in (I:\frakp)$ and a monomial $r\in S$ such that $f=h^{q}r$. Clearly, $h\not\in I$ as $f\not\in I^{[q]}$. Note that $\Ass(I^{[q]}:h^{q})=\Ass((I:h)^{[q]})=\Ass(I:h)\subseteq \Ass(I)$. Now, $((I^{[q]}:h^{q}):r)=\frakp$ implies $\frakp\in \Ass(I^{[q]}:h^q)=\Ass(I:h)$. Since $\frakp\subseteq (I:h)$ and $\frakp\in \Ass(I:h)$, we have $(I:h)=\frakp$. Therefore, $\vnum(I)\leq \deg h$.
	
	Again, observe that $\frakp=(I^{[q]}:h^qr)=(I:h)^{[q]}:r= \frakp^{[q]}:r$. Since $I$ is a monomial ideal, $\frakp$ is a prime ideal generated by linear forms. Hence, by Lemma \ref{lem:vi_ci}, we get $\vnum(\frakp^{[q]})=(q-1)\height(\frakp)\leq \deg r$. Therefore, $\vnum(I^{[q]})=q\deg h+\deg r\geq q\vnum(I)+(q-1)\height(I)$.
	
	Further, if $I$ is unmixed, so $\bight(I)=\height (I)$ and hence the final assertion follows from Proposition \ref{propviqupper}.
\end{proof}
\begin{theorem}
	\label{thm:assymptotic_of_vi}
	Let $I\subseteq S$ be a graded ideal. Then the following results hold
	\begin{enumerate}
		\item \label{thm:vi:nondec} $\vnum(I^{[q]})\geq q\vnum(I)$ for all $q\geq 1$ and hence $\left\{\frac{\vnum(I^{[q]})}{q}\right\}$ is a non-decreasing sequence in $q$, where $ q=p^e, e\in \mathbb{N}$.
		\item \label{thm:vi:lim} $\displaystyle{\lim_{q \to \infty}} \frac{\vnum(I^{[q]})}{q}$ exists.
		\item \label{thm:vi:limeq} If $I$ is unmixed and monomial, then $\displaystyle{\lim_{q \to \infty}} \frac{\vnum(I^{[q]})}{q}=\vnum(I)+\height(I)$.
	\end{enumerate}
\end{theorem}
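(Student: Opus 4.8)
The plan is to deduce all three parts from the linear bounds already established, namely Proposition~\ref{propviqupper}, Proposition~\ref{propviqlower}, and Lemma~\ref{lem:vi_ci}, together with a Fekete-type subadditivity argument for the general (non-monomial) case of part~(\ref{thm:vi:lim}).

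For part~(\ref{thm:vi:nondec}), first I would dispose of $q=1$ trivially, and for $q>1$ use Proposition~\ref{propviqlower} applied not to $I$ directly but to a chosen associated prime, or more simply observe that the argument of Proposition~\ref{propviqlower} already shows $\vnum(I^{[q]})\ge q\vnum(I)$ for monomial $I$, while for an arbitrary graded ideal one argues as follows: pick $f\in S_d$ and $\frakp\in\Ass(I^{[q]})=\Ass(I)$ with $I^{[q]}:f=\frakp$ and $d=\vnum(I^{[q]})$. Since $\frakp^{[q]}\subseteq I^{[q]}$, the containment $f\frakp\subseteq I^{[q]}$ does not immediately land inside a Frobenius power, so instead I would use that $f(\frakp^{[q]})\subseteq I^{[q]}$ gives $f\in I^{[q]}:\frakp^{[q]}=(I:\frakp)^{[q]}$, whence $f=\sum_j h_j^{q}r_j$ with $h_j\in I:\frakp$; a minimal-degree term argument forces some $h_j\notin I$ with $(I:h_j)=\frakp$ (as in the proof of Proposition~\ref{propviqlower}, but note the monomial hypothesis there was only used to write $f=h^q r$ with a single term — in general one must be more careful). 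The cleanest route is: part~(\ref{thm:vi:nondec}) for monomial ideals is Proposition~\ref{propviqlower}; for the general case I would instead establish directly the submultiplicativity $\vnum(I^{[qq']})\ge q\,\vnum(I^{[q']})$ by the same manipulation $(I^{[q']})^{[q]}=I^{[qq']}$, reducing everything to the case already handled, and then conclude that $q\mapsto \vnum(I^{[q]})/q$ is non-decreasing along the sequence $q=p^e$ because $\vnum(I^{[p^{e+1}]})/p^{e+1}=\vnum((I^{[p^e]})^{[p]})/(p\cdot p^e)\ge \vnum(I^{[p^e]})/p^e$ using the $q=p$ instance of part~(\ref{thm:vi:nondec}) applied to the ideal $I^{[p^e]}$.

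For part~(\ref{thm:vi:lim}), with part~(\ref{thm:vi:nondec}) in hand the sequence $\{\vnum(I^{[q]})/q\}$ is non-decreasing, so it suffices to bound it above. Proposition~\ref{propviqupper} gives $\vnum(I^{[q]})\le q\vnum_{\frakp}(I)+(q-1)\sum_{i}\deg g_i$ for any fixed $\frakp\in\Ass(I)$ with its minimal generators $g_i$; dividing by $q$ shows the sequence is bounded above by $\vnum_{\frakp}(I)+\sum_i\deg g_i$, a constant independent of $q$. A bounded monotone sequence converges, so the limit exists. For part~(\ref{thm:vi:limeq}), if $I$ is unmixed and monomial, Proposition~\ref{propviqlower} gives the \emph{exact} value $\vnum(I^{[q]})=q\vnum(I)+(q-1)\height(I)$ for all $q>1$; dividing by $q$ yields $\vnum(I)+\height(I)-\height(I)/q$, whose limit as $q\to\infty$ is precisely $\vnum(I)+\height(I)$.

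The main obstacle is the general (non-monomial) case of part~(\ref{thm:vi:nondec}): in Proposition~\ref{propviqlower} the monomial hypothesis is genuinely used to pass from $f\in (I:\frakp)^{[q]}$ to a factorization $f=h^q r$ with a single monomial $h$, and for a general graded ideal $f$ is only a $K$-linear combination of such products. I expect the fix to be the reduction described above — proving the multiplicative statement $\vnum(I^{[qq']})\ge q\,\vnum(I^{[q']})$ is no harder than the $q'=1$ case once one has it, so the whole of part~(\ref{thm:vi:nondec}) rests on the single inequality $\vnum(J^{[p]})\ge p\,\vnum(J)$ for an arbitrary graded ideal $J$, which should follow from the degree-bookkeeping in $(J:\frakp)^{[p]}$ by tracking the least-degree homogeneous component rather than a single term. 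If that general inequality proves stubborn, the statement of part~(\ref{thm:vi:nondec}) as written (for $q=p^e$, $e\in\mathbb{N}$) still only needs the base inequality for prime characteristic $p$, which is the case $q=p$ and where Frobenius is additive, so $I^{[p]}=(a^p:a\in I)$ behaves especially well.
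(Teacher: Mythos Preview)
Your treatment of parts~(\ref{thm:vi:lim}) and~(\ref{thm:vi:limeq}) matches the paper's exactly: monotonicity plus the upper bound from Proposition~\ref{propviqupper} gives convergence, and the exact formula from Proposition~\ref{propviqlower} gives the limit in the unmixed monomial case.

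For part~(\ref{thm:vi:nondec}) you correctly set up the argument---pick $f$ with $I^{[q]}:f=\frakp$, deduce $f\in I^{[q]}:\frakp^{[q]}=(I:\frakp)^{[q]}$, and write $f=\sum_j h_j^{q}r_j$ with $h_j\in I:\frakp$---but you leave a genuine gap at the crucial step. You say ``a minimal-degree term argument forces some $h_j\notin I$ with $(I:h_j)=\frakp$,'' and then retreat to reducing everything to the base case $q=p$. Neither of these works as stated: the difficulty is not about degrees (since $f$ is homogeneous one may take every $h_j^{q}r_j$ to have degree exactly $\deg f$), and the reduction to $q=p$ does not simplify anything, because the same sum-versus-single-term issue arises for $J^{[p]}$ with $J$ an arbitrary graded ideal.

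The paper resolves this with a primality argument you are missing. First, any term with $h_i\in I$ contributes $h_i^{q}r_i\in I^{[q]}$, so subtracting it does not change $I^{[q]}:f$; hence one may assume every $h_i\in (I:\frakp)\setminus I$. Next, since $f\in(h_1^{q}r_1,\ldots,h_s^{q}r_s)$, one has
\[
\bigcap_{i=1}^{s}\bigl(I^{[q]}:h_i^{q}r_i\bigr)=I^{[q]}:(h_1^{q}r_1,\ldots,h_s^{q}r_s)\subseteq I^{[q]}:f=\frakp.
\]
Because $\frakp$ is prime and a product of the colon ideals lies in their intersection, some single factor satisfies $(I^{[q]}:h_i^{q}r_i)\subseteq\frakp$. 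Then $(I:h_i)^{[q]}=I^{[q]}:h_i^{q}\subseteq I^{[q]}:h_i^{q}r_i\subseteq\frakp$ forces $(I:h_i)\subseteq\frakp$; combined with $\frakp\subseteq I:h_i$ (from $h_i\in I:\frakp$) this gives $(I:h_i)=\frakp$, so $\vnum(I)\le\deg h_i$ and $\vnum(I^{[q]})=\deg f\ge q\deg h_i\ge q\vnum(I)$. This is the missing ingredient; once you have it, your bootstrap $\vnum(I^{[pq]})=\vnum((I^{[q]})^{[p]})\ge p\,\vnum(I^{[q]})$ for monotonicity is exactly what the paper does. (Minor slip: you wrote $\frakp^{[q]}\subseteq I^{[q]}$, which is backwards; the inclusion you actually use, $f\frakp^{[q]}\subseteq f\frakp\subseteq I^{[q]}$, is correct.)
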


\begin{proof}
	$(\ref{thm:vi:nondec})$: Let $f$ be a homogeneous polynomial and $\frakp \in \Ass (I^{[q]})$ such that $(I^{[q]}:f)=\frakp$ and $\deg f=\vnum(I^{[q]})$. Since $f\frakp^{[q]}\subseteq f\frakp\subseteq I^{[q]}$, we have $f\in (I^{[q]}:\frakp^{[q]})=(I:\frakp)^{[q]}$.
	Hence $f=\sum_{i=1}^{s}h_{i}^{q}r_{i}$, for some $h_{i}\in (I:\frakp)$ and $r_i\in S$ for all $1\leq i\leq s$. If $h_i\in I$ for some $i$, then we can replace $f$ by $f-h_i^qr_i$. Thus, without loss of generality, we can choose $f$ such that $f=\sum_{i=1}^{s}h_{i}^{q}r_{i}$, and $h_{i}\in (I:\frakp)\setminus I$, $r_i\in S$ for all $1\leq i\leq s$. Using the observation that $f\in (h_{1}^{q}r_1,\ldots,h_{s}^{q}r_s)$, we have
	$$\bigcap_{i=1}^{s}(I^{[q]}:h_{i}^{q}r_{i})=(I^{[q]}:(h_{1}^{q}r_1,\ldots,h_{s}^{q}r_s))\subseteq (I^{[q]}:f)=\frakp.$$
	Thus, $(I^{[q]}:h_{i}^{q}r_{i})\subseteq \frakp$ for some $i\in\{1,\ldots,s\}$. Now, observe that $(I:h_{i})^{[q]}=(I^{[q]}:h_{i}^q)\subseteq (I^{[q]}:h_{i}^{q}r_{i})\subseteq \frakp$, which imply $(I:h_{i})\subseteq \frakp$. Since $h_{i}\in (I:\frakp)\setminus I$, we have $(I:h_{i})=\frakp$. Therefore, $\vnum(I)\leq \deg h_i$ and hence $\vnum(I^{[q]})=\deg f \geq q \deg h_i \geq q\vnum(I)$.
	
	Note that $I^{[pq]}=(I^{[q]})^{[p]}$. Therefore
	$$\frac{\vnum(I^{[pq]})}{pq}\geq\frac{p\vnum(I^{[q]})}{pq}=\frac{\vnum(I^{[q]})}{q}.$$ 
	Hence, $\frac{\vnum(I^{[q]})}{q}$ is a non-decreasing sequence.
	
	$(\ref{thm:vi:lim})$: By $(\ref{thm:vi:nondec})$, we get $\frac{\vnum(I^{[q]})}{q}$ is a non-decreasing sequence of real numbers. Also, from Proposition \ref{propviqupper}, it follows that the sequence $\frac{\vnum(I^{[q]})}{q}$ is bounded above by $\vnum_{\frakp}(I)+ \sum_{i=1}^{m}\deg g_i$ for some associated prime $\frakp =(g_1,\ldots,g_m)$. Hence, $\displaystyle{\lim_{q \to \infty}} \frac{\vnum(I^{[q]})}{q}$ exists.
	
	$(\ref{thm:vi:limeq})$: If $I$ is unmixed monomial ideal, then by Proposition~\ref{propviqlower}, $\vnum(I^{[q]})= q\vnum(I)+(q-1)\height(I)$. The outcome is obtained by dividing the above expression by $q$ and taking the limit.
\end{proof}

\begin{theorem}\label{thmalphaq}
	Let $I\subseteq S$ be a graded ideal. Then, the following hold 
	\begin{enumerate}
		\item\label{thm:alph:less_vi} $\alpha_{q}(I)\leq \vnum(I^{[q]})$ for each $q>1$.
		\item \label{thm:alph:nondec} $\left\{\frac{\alpha_{q}(I)}{q} \right\}$ is a non-decreasing sequence in $q$, where $ q=p^e, e\in \mathbb{N}$.
		\item \label{thm:alph:lim}  $\displaystyle{\lim_{q\to \infty}}\frac{\alpha_{q}(I)}{q}$ exists.
		\item \label{thm:alph:geq_vi} If $I$ is a radical ideal, then there exists an associated prime $\frakp\in \Ass I$, such that $\alpha_{q}(I)\geq \vnum(I^{[q]})-\vnum_{\frakp}(I)$ for each $q>1$.
		\item \label{thm:alph:lim_equal} If $I$ is a radical ideal, then	$\displaystyle{\lim_{q\to \infty}}\frac{\alpha_{q}(I)}{q}=\displaystyle{\lim_{q\to \infty}}\frac{\vnum(I^{[q]})}{q}$.
		\item \label{thm:alph:ceil_vi} If $I$ is an unmixed square-free monomial ideal, then
		for any $q>\dim S$, we get $\lceil \frac{\alpha_{q}(I)}{q}\rceil=\vnum(I)+\height (I)$, where $\lceil z \rceil$ denotes the least integer greater than or equal to $z$, known as the ceiling function.
	\end{enumerate}
\end{theorem}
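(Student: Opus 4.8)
The plan is to establish the six assertions roughly in order, feeding earlier parts into later ones. For part (\ref{thm:alph:less_vi}), I would take a homogeneous $f$ of degree $\vnum(I^{[q]})$ with $I^{[q]}:f=\frakp\in\Ass(I^{[q]})=\Ass(I)$; since $I\subseteq\frakp$ we get $fI\subseteq f\frakp\subseteq I^{[q]}$, so $f\in I^{[q]}:I$, while $f\notin I^{[q]}$, giving a nonzero element of $(I^{[q]}:I)/I^{[q]}$ of degree $\vnum(I^{[q]})$; hence $\alpha_q(I)\le\vnum(I^{[q]})$. For part (\ref{thm:alph:nondec}) I would mimic the argument in Theorem \ref{thm:assymptotic_of_vi}(\ref{thm:vi:nondec}): using $(I^{[q]})^{[p]}=I^{[pq]}$ and the elementary fact that $J^{[p]}:K^{[p]}=(J:K)^{[p]}$, if $g\in(I^{[q]}:I)\setminus I^{[q]}$ has degree $\alpha_q(I)$, then $g^p\in(I^{[pq]}:I^{[p]})$, and one checks $g^p\notin I^{[pq]}$ (since $I^{[pq]}=(I^{[q]})^{[p]}$ and $g\notin I^{[q]}$); then since $I\subseteq$ each associated prime and $I^{[p]}\subseteq I$... actually more carefully, I want $g^p\cdot I\subseteq I^{[pq]}$, which follows from $g\cdot I\subseteq$? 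Here one needs $gI\subseteq I^{[q]}$ only gives $g^pI^{[p]}\subseteq I^{[pq]}$; to upgrade to $g^pI\subseteq I^{[pq]}$ one argues that the relevant quotient is supported on $\Ass(I)$ and works associated-prime-locally, or equivalently uses that $\alpha_q$ only depends on $(I^{[q]}:I)/I^{[q]}$ whose annihilator has the same radical as $I$. This gives $\alpha_{pq}(I)\le p\,\alpha_q(I)$, so $\{\alpha_q(I)/q\}$ is non-decreasing, and part (\ref{thm:alph:lim}) is then immediate from Lemma \ref{lemma:alpha_q_bounded}, which bounds $\alpha_q(I)/q$ above by $\sum_i\deg g_i$.

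For part (\ref{thm:alph:geq_vi}) I would use the radical hypothesis to write $I=\frakp_1\cap\cdots\cap\frakp_r$ with the $\frakp_j$ the (minimal = all) associated primes, so $I^{[q]}=\frakp_1^{[q]}\cap\cdots\cap\frakp_r^{[q]}$ by the first Lemma of the section. Take $g\in(I^{[q]}:I)\setminus I^{[q]}$ of degree $\alpha_q(I)$; since $g\notin I^{[q]}$ there is some $j$ with $g\notin\frakp_j^{[q]}$, and $gI\subseteq I^{[q]}\subseteq\frakp_j^{[q]}$ forces $g\frakp_j\subseteq\frakp_j^{[q]}$ after localizing at $\frakp_j$ (where $I$ and $\frakp_j$ agree up to the irrelevant localization), i.e. $g\in\frakp_j^{[q]}:\frakp_j$ but $g\notin\frakp_j^{[q]}$, so $\vnum(\frakp_j^{[q]})=\alpha_q(\frakp_j)\le\deg g=\alpha_q(I)$ by Lemma \ref{lemma:alphaq_equal_vnum}. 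On the other hand, given $h$ of degree $\vnum(I^{[q]})$ with $I^{[q]}:h=\frakp_i$, one produces (as in Proposition \ref{propviqlower}) a factorization $h=u^q v$ with $u\in(I:\frakp_i)\setminus I$ and $\frakp_i^{[q]}:v=\frakp_i$; then $\deg v\ge\vnum(\frakp_i^{[q]})=\alpha_q(\frakp_i)$ and $\deg u\ge\vnum_{\frakp_i}(I)$, so $\vnum(I^{[q]})=q\deg u+\deg v\ge q\vnum_{\frakp_i}(I)+\alpha_q(\frakp_i)$... the indices $i$ and $j$ must be reconciled: I would instead directly show $\alpha_q(I)\ge\vnum(I^{[q]})-\vnum_{\frakp}(I)$ for the specific $\frakp=\frakp_i$ arising from the optimal $h$, by noting $\alpha_q(I)\ge\alpha_q(\frakp_i)=\vnum(\frakp_i^{[q]})$ (the first half applies since we may pick $g$ adapted to $\frakp_i$: indeed $h^{\,?}$...) — the clean route is $\alpha_q(I)\ge\alpha_q(\frakp_i)$ whenever $\frakp_i$ is a minimal prime of $I$, which follows by localizing at $\frakp_i$. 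Then part (\ref{thm:alph:lim_equal}) follows by combining: $\alpha_q(I)/q\le\vnum(I^{[q]})/q$ from (\ref{thm:alph:less_vi}), and $\vnum(I^{[q]})/q\le\alpha_q(I)/q+\vnum_{\frakp}(I)/q\to\alpha_q(I)/q$'s limit from (\ref{thm:alph:geq_vi}), so both limits (which exist by (\ref{thm:alph:lim}) and Theorem \ref{thm:assymptotic_of_vi}) coincide.

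Finally, for part (\ref{thm:alph:ceil_vi}), assume $I$ is unmixed and square-free, so in particular radical, and every associated prime has the same height $\height(I)$. By Proposition \ref{propviqlower}, $\vnum(I^{[q]})=q\vnum(I)+(q-1)\height(I)$, hence $\vnum(I^{[q]})/q=\vnum(I)+\height(I)-\height(I)/q$; since $0<\height(I)/q\le\height(I)/(\dim S+1)<1$ when $q>\dim S$ (as $\height(I)\le\dim S$, with strict inequality unless $I$ is the whole maximal ideal, a degenerate case one handles separately), we get $\lceil\vnum(I^{[q]})/q\rceil=\vnum(I)+\height(I)$. To replace $\vnum(I^{[q]})$ by $\alpha_q(I)$ inside the ceiling, I would combine (\ref{thm:alph:less_vi}) and (\ref{thm:alph:geq_vi}): $\vnum(I^{[q]})-\vnum_{\frakp}(I)\le\alpha_q(I)\le\vnum(I^{[q]})$, so $\alpha_q(I)/q$ and $\vnum(I^{[q]})/q$ differ by less than $\vnum_{\frakp}(I)/q$; a slightly sharper bound is needed to force equal ceilings, and here I would use that in the square-free monomial case the factorization $h=u^qv$ forces $\deg v$ to be a multiple-of-$\height$ type quantity — more precisely, arguing as in Proposition \ref{propviqlower} one in fact gets $\alpha_q(I)=q\vnum(I)+(q-1)\height(I)-(\text{something bounded})$, and the cleanest statement to extract is $q\vnum(I)+(q-1)\height(I)\ge\alpha_q(I)\ge (q-1)\height(I)+\vnum(I)$ (lower bound: pick $g$ adapted to a minimal prime via Proposition \ref{propviqlower}-style analysis applied to $I^{[q]}:I$ itself), whence $\alpha_q(I)/q\in[\vnum(I)+\height(I)-(\vnum(I)+\height(I))/q,\ \vnum(I)+\height(I)-\height(I)/q]$, and for $q>\dim S$ both endpoints have ceiling $\vnum(I)+\height(I)$. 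The main obstacle throughout is the bookkeeping in (\ref{thm:alph:geq_vi}) and (\ref{thm:alph:ceil_vi}): passing between the "global" invariant $\alpha_q(I)$ and the primary components $\frakp_j^{[q]}$ requires care because $f\notin I^{[q]}$ only guarantees $f\notin\frakp_j^{[q]}$ for \emph{some} $j$, and one must make sure the associated prime witnessing $\vnum(I^{[q]})$ is the same one controlling $\alpha_q$; the localization argument (using that for a minimal prime $\frakp$ of $I$, $I$ and $\frakp$ become equal after localizing at $\frakp$, up to extending scalars) is what makes this go through, and establishing the final sandwich bound tight enough to pin down the ceiling in (\ref{thm:alph:ceil_vi}) is where the square-free and unmixed hypotheses must both be used essentially.
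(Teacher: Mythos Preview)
Your treatment of (\ref{thm:alph:less_vi}), (\ref{thm:alph:lim}), and (\ref{thm:alph:lim_equal}) is fine and matches the paper. The real problems are in (\ref{thm:alph:nondec}), (\ref{thm:alph:geq_vi}), and (\ref{thm:alph:ceil_vi}).

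For (\ref{thm:alph:nondec}) you have the direction backwards. Starting from $g$ of degree $\alpha_q(I)$ and pushing up to $g^p$ can at best show $\alpha_{pq}(I)\le p\,\alpha_q(I)$, which says the sequence is non-\emph{increasing}, not non-decreasing; your own conclusion sentence contradicts the inequality you wrote. Moreover the step you flagged --- getting $g^pI\subseteq I^{[pq]}$ from $gI\subseteq I^{[q]}$ --- genuinely fails in general, and the vague appeal to support/localization does not repair it. The paper runs the argument in the opposite direction: take $f$ of degree $\alpha_{pq}(I)$ in $(I^{[pq]}:I)\setminus I^{[pq]}$, use $I^{[pq]}:I\subseteq I^{[pq]}:I^{[p]}=(I^{[q]}:I)^{[p]}$ to write $f=\sum h_i^p r_i$ with $h_i\in I^{[q]}:I$, observe some $h_i\notin I^{[q]}$ (else $f\in I^{[pq]}$), and conclude $\alpha_{pq}(I)=\deg f\ge p\deg h_i\ge p\,\alpha_q(I)$.

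For (\ref{thm:alph:geq_vi}) your primary-decomposition/localization route gets tangled precisely where you say it does: the prime for which you can show $\alpha_q(I)\ge\alpha_q(\frakp_j)$ need not be the prime witnessing $\vnum(I^{[q]})$, and the ``factorization $h=u^qv$'' you invoke comes from Proposition~\ref{propviqlower}, which is only proved for monomial ideals. The paper's argument is short and avoids all of this: take $f$ of degree $\alpha_q(I)$ in $(I^{[q]}:I)\setminus I^{[q]}$, pick any $\frakp\in\Ass(I^{[q]}:f)\subseteq\Ass(I)$, and choose $h$ with $I:h=\frakp$ and $\deg h=\vnum_\frakp(I)$. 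Then $\frakp=I:h\subseteq (I^{[q]}:f):h=I^{[q]}:fh$. The radical hypothesis is used exactly once, to check $fh\notin I^{[q]}$: otherwise $h\in I^{[q]}:f\subseteq\frakp=I:h$, whence $h^2\in I$ and $h\in I$, a contradiction. Now Remark~\ref{remark:v_Iless_deg_f} gives $I^{[q]}:fh=\frakp$, so $\vnum(I^{[q]})\le\alpha_q(I)+\vnum_\frakp(I)$.

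For (\ref{thm:alph:ceil_vi}) the sandwich you want drops out immediately once you have the paper's version of (\ref{thm:alph:geq_vi}). Combining $\vnum(I^{[q]})-\vnum_\frakp(I)\le\alpha_q(I)\le\vnum(I^{[q]})$ with the exact formula $\vnum(I^{[q]})=q\vnum(I)+(q-1)\height(I)$ from Proposition~\ref{propviqlower} gives
\[
\vnum(I)+\height(I)-\tfrac{\vnum_\frakp(I)+\height(I)}{q}\ \le\ \tfrac{\alpha_q(I)}{q}\ \le\ \vnum(I)+\height(I)-\tfrac{\height(I)}{q}.
\]
The missing ingredient you were groping for is the bound $\vnum_\frakp(I)+\height(I)\le\dim S$ for square-free monomial $I$ (this is \cite[Lemma~3.4]{vedge}); once $q>\dim S$ both correction terms lie strictly in $(0,1)$ and the ceiling is forced. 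Your proposed lower bound $\alpha_q(I)\ge(q-1)\height(I)+\vnum(I)$ is neither proved nor needed.
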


\begin{proof}
	$(\ref{thm:alph:less_vi})$: Let $f$ be a homogeneous polynomial and $\frakp \in \Ass(I^{[q]})$ such that $I^{[q]}:f=\frakp$ and $\vnum(I^{[q]})=\deg f$. 
	Since $\Ass(I^{[q]})=\Ass(I)$, we have $fI\subseteq f\frakp \subseteq I^{[q]}$, which gives $f\in (I^{[q]}:I)\setminus I^{[q]}$. Hence, $\alpha_{q}(I)\leq \vnum(I^{[q]})$.
	
	$(\ref{thm:alph:nondec})$ It is sufficient to show $\alpha_{pq}(I)\geq p\alpha_{q}(I)$. Let $f\in (I^{[pq]}:I)\setminus I^{[pq]}$ be a homogeneous element of degree $\alpha_{pq}(I)$. Since $I^{[pq]}:I\subseteq I^{[pq]}:I^{[p]}= \left(I^{[q]}:I \right)^{[p]}$, $f=\sum_{i=1}^{s} h_i^p r_i$ for some $h_i\in I^{[q]}:I$ and $r_i\in S$, $1\leq i \leq s$. If  $h_i\in I^{[q]}$ for all $1\leq i \leq s$, then $f\in I^{[pq]}$ which is not true. Hence there exists an $i$, such that $h_i\in (I^{[q]}:I)\setminus I^{[q]}$. So $\alpha_{q}(I)\leq \deg h_i$. 
	Thus, $\alpha_{pq}(I)= \deg f = p \deg h_i +\deg r_i \geq p \alpha_q (I)$. Hence, the proof follows.
	
	$(\ref{thm:alph:lim}):$ By Lemma~\ref{lemma:alpha_q_bounded}, $\frac{\alpha_{q}(I)}{q}$ is bounded above by $\sum_{i=1}^{m}\deg g_i$, where $I=(g_1,\ldots,g_m)$. Since $\frac{\alpha_{q}(I)}{q}$ is a non-decreasing sequence  and bounded above, so $\displaystyle{\lim_{q\to \infty}}\frac{\alpha_{q}(I)}{q}$ exists.
	
	$(\ref{thm:alph:geq_vi}):$ Let $f\in (I^{[q]}:I)\setminus I^{[q]}$  be a homogeneous polynomial such that $\deg(f)=\alpha_{q}(I)$. Consider a prime ideal $\frakp \in \Ass(I^{[q]}:f)$. Note that $\Ass(I^{[q]}:f)\subseteq \Ass(I^{[q]})= \Ass(I)$. Hence there exists a homogeneous polynomial $h\in S$ such that $(I:h)=\frakp$ and $\deg(h)=\vnum_{\frakp}(I)$. Now, $\frakp=I:h\subseteq (I^{[q]}:f):h=(I^{[q]}:fh)$. Suppose $fh\in I^{[q]}$. Then $h\in (I^{[q]}:f)\subseteq \frakp=I:h$, consequently $h^2\in I$, which gives a contradiction as $I$ is radical. Therefore, $fh\not\in I^{[q]}$ and so, $I^{[q]}:fh=\frakp$ (Remark~\ref{remark:v_Iless_deg_f}). Thus, $\vnum(I^{[q]})\leq \alpha_{q}(I)+\vnum_{\frakp}(I)$. That is $\alpha_{q}(I)\geq \vnum(I^{[q]})-\vnum_{\frakp}(I)$. 
	
	$(\ref{thm:alph:lim_equal}):$ From $(\ref{thm:alph:less_vi})$ and $(\ref{thm:alph:geq_vi})$, $\vnum(I^{[q]})-\vnum_{\frakp}(I)\leq \alpha_{q}(I)\leq \vnum(I^{q})$ for some associated prime $\frakp \in \Ass I$. Therefore 
	$\displaystyle{\lim_{q\to \infty}}\frac{\alpha_{q}(I)}{q}=\displaystyle{\lim_{q\to \infty}}\frac{\vnum(I^{[q]})}{q}$.
	
	$(\ref{thm:alph:ceil_vi}):$ Since $\vnum(I^{[q]})-\vnum_{\frakp}(I)\leq \alpha_{q}(I)\leq \vnum(I^{q})$ for some associated prime $\frakp \in \Ass I$, by Proposition \ref{propviqlower}, we get
	$$(\vnum(I)+\height(I))-\frac{\vnum_{\frakp}(I)+\height(I)}{q}\leq \frac{\alpha_{q}(I)}{q} \leq (\vnum(I)+\height(I))-\frac{\height(I)}{q}.$$
	Since $\vnum_{\frakp}(I)+\height(I)\leq \dim S$ \cite[Lemma 3.4]{vedge}, so for any $q>\dim S$, $0<\frac{\height(I)}{q}\leq \frac{\vnum_{\frakp}(I)+\height(I)}{q} <1$. Next, apply the ceiling function in order to obtain the desired result.
\end{proof}

\begin{remark}\label{rmk:vnum_pol}
	{\rm
Let $I$ be an unmixed monomial ideal in a polynomial ring $S$ of any characteristic. We denote by $I^{\mathcal{P}}$ the polarization of $I$ (see \cite{faridi06} for polarization technique) and $S^{\mathcal{P}}$ denote the corresponding ring of $I^{\mathcal{P}}$. By \cite[Proposition 2.3]{faridi06}, $I^{\mathcal{P}}$ is unmixed and $\height(I^{\mathcal{P}})=\height (I)$. Again, by \cite[Corollary 3.5]{ki_vmon},
$\vnum(I^{\mathcal{P}})=\vnum(I)$. Let $p$ be a prime number greater than $\dim S^{\mathcal{P}}$. Since the $\vnum$-numbers of monomial ideals do not depend on the characteristic, we may assume $S$ is of characteristic $p$. Therefore, by Theorem \ref{thmalphaq}(\ref{thm:alph:ceil_vi}), we get 
$\vnum(I)=\vnum(I^{\mathcal{P}})=\lceil \alpha_p(I^{\mathcal{P}})\rceil-\height (I)$.
}
\end{remark}
\section{Some Questions}
\label{sec:question}
Let $I\subset S$ be a graded ideal such that all of its associated primes are generated by linear forms.

Theorem \ref{thm:level_vnum} establishes that if $S/I$ is a level algebra, then $\vnum(I)\geq \reg S/I$. We wonder whether the converse of the statement is also true.
\begin{question}
	Assume the above setup. Moreover, assume $S/I$ is Cohen-Macaulay and $\vnum(I)\geq \reg S/I$. Then is $S/I$ a level algebra?
\end{question}
The following example suggests that the Cohen-Macaulay assumption in the above question is necessary. Take $S=K[x_1,\ldots,x_4]$, and $I=(x_1x_2,x_2x_3,x_3x_4,x_1x_4)$. Then $\vnum(I)=\reg S/I=1$, and $S/I$ is unmixed, but not Cohen-Macaulay.

The example also suggests that we should inquire whether it is possible to reduce the level hypothesis while still aiming for $\vnum(I)\geq \reg S/I$. Let $\mathbf{F}_{\bullet}$ be the minimal free resolution of $S/I$ and $c=\Projdim S/I$. Due to our computational evidence, we propose the following question.
\begin{question}
	Assume the above setup. Moreover, assume $S/I$ is unmixed, and $F_c$ has a basis consisting of same-degree elements, i.e. there exists a unique integer $j$ such that the Betti number $\beta_{c,j}$ is non-zero. Then is it true that $\vnum(I)\geq \reg S/I$? Further, can we drop the unmixed assumption? 
\end{question}
The assumptions of the question are identical to those of a level algebra, with the exception that unmixedness is assumed instead of Cohen-Macaulayness. We could not come up with a counter-example to this question also.
\section{Acknowledgment}
Kamalesh Saha would like to thank the National Board for Higher Mathematics (India) for the financial support through the NBHM Postdoctoral Fellowship. Again, both authors were partially supported by an Infosys Foundation fellowship.

\printbibliography
\end{document}